\documentclass{amsart}
\usepackage{amsfonts}

\setcounter{MaxMatrixCols}{10}

\newtheorem{theorem}{Theorem}
\theoremstyle{plain}
\newtheorem{acknowledgement}{Acknowledgement}

\newtheorem{corollary}{Corollary}

\newtheorem{lemma}{Lemma}

\newtheorem{proposition}{Proposition}
\newtheorem{remark}{Remark}

\numberwithin{equation}{section}
\input{tcilatex}

\begin{document}
\title[Askey--Wilson Integral]{Askey--Wilson Integral and its
Generalizations }
\author{Pawe\l\ J. Szab\l owski}
\address{Department of Mathematics and Information Sciences,\\
Warsaw University of Technology\\
pl. Politechniki 1, 00-661 Warsaw, Poland}
\email{pawel.szablowski@gmail.com}
\date{December 20, 2011}
\subjclass{33D45, 05A30, 05E05}
\keywords{Askey--Wilson integral, Askey--Wilson polynomials, q-Hermite
polynomials, expansion of ratio of densities, symmetric functions.}

\begin{abstract}
We expand the Askey--Wilson (AW) density in a series of products of
continuous $q-$Hermite polynomials times the density that makes these
polynomials orthogonal. As a by-product we obtain the value of the AW
integral as well as the values of integrals of $q-$Hermite polynomial times
the AW density ($q-$Hermite moments of AW density). Our approach uses nice,
old formulae of Carlitz and is general enough to venture a generalization.
We prove that it is possible and pave the way how to do it. As a result we
obtain system of recurrences that if solved successfully gives a sequence of
generalized AW densities with more and more parameters.
\end{abstract}

\maketitle

\section{Introduction and Preliminaries}

\subsection{Introduction}

We consider sequence of nonnegative, integrable functions: $%
g_{n}:[-1,1]\longmapsto \mathbb{R}^{+}$ defined by the formula: 
\begin{equation*}
g_{n}\left( x|\mathbf{a}^{\left( n\right) },q\right) \allowbreak
=\allowbreak f_{h}\left( x|q\right) \prod_{j=1}^{n}\varphi _{h}\left(
x|a_{j},q\right) ,
\end{equation*}%
where $\mathbf{a}^{\left( n\right) }\allowbreak =\allowbreak (a_{1},\ldots
,a_{n}),$ functions $f_{h}$ and $\varphi _{h}$ defined by (\ref{f_h}) and (%
\ref{chH}) denote in fact respectively the density of measure that makes the
so called continuous $q-$Hermite polynomials orthogonal and the
characteristic function of these polynomials calculated at points $a_{j},$ $%
j=1,\ldots ,n.$ Naturally functions $g_{n}$ are symmetric with respect to
vectors $\mathbf{a}^{\left( n\right) }.$

Our elementary but crucial for this paper observation is that examples of
such functions are proportional to the densities of measures that make
orthogonal respectively the so called continuous $q-$Hermite (q-Hermite, $%
n\allowbreak =\allowbreak 0,$ \cite{KLS}, (14.26.2)), big $q-$Hermite (bqH, $%
n\allowbreak =\allowbreak 1,$ \cite{KLS}, (14.18.2)$),$ Al-Salam--Chihara
(ASC, $n\allowbreak =\allowbreak 2,$ \cite{KLS}, (14.8.2)), continuous dual
Hahn (C2H, $n\allowbreak =\allowbreak 3$, \cite{KLS}, (14.3.2))$,$
Askey--Wilson (AW, $\allowbreak n=\allowbreak 4$, \cite{KLS}, (14.1.2))
polynomials. This observation makes functions $g_{n}$ important and what is
more exciting allows possible generalization of both AW integral as well as
AW polynomials, i.e. go beyond $n\allowbreak =\allowbreak 4.$

Similar observations were made in fact in \cite{Andrews1999} when commenting
on formula (10.11.19). Hence one can say that we are developing certain idea
of \cite{Andrews1999}.

Let us notice that this is a second attempt to generalize AW polynomials.
The other one was made in \cite{Szabl-peculiar} by generalizing certain
properties of generating functions of $q-$Hermite, bqH, ASC, C2H and AW
polynomials.

On the other hand by the observation that these functions are symmetric in
variables $\mathbf{a}^{\left( n\right) }$ we enter the fascinating world of
symmetric functions.

The paper is organized as follows. Next Subsection \ref{prelim} presents
used notation and basic families of orthogonal polynomials that will appear
in the sequel. We also present here important properties of these
polynomials.

Section \ref{main} is devoted to expanding functions $g_{n}$ in the series
of the form: 
\begin{equation*}
g_{n}\left( x|\mathbf{a}^{\left( n\right) },q\right) =A_{n}\left( \mathbf{a}%
^{\left( n\right) },q\right) f_{h}\left( x|q\right) \sum_{j\geq 0}\frac{%
T_{j}^{\left( n\right) }\left( \mathbf{a}^{\left( n\right) },q\right) }{%
\left( q\right) _{j}}h_{j}\left( x|q\right) ,
\end{equation*}%
where $\left\{ h_{n}\right\} $ denote q-Hermite polynomials, $\left\{
T_{j}^{\left( n\right) }\right\} $ are sequences of certain symmetric
functions and finally $\left\{ A_{n}\right\} $ are the values of the
integrals%
\begin{equation*}
\int_{-1}^{1}g_{n}\left( x|\mathbf{a}^{\left( n\right) },q\right) dx,
\end{equation*}%
and symbol $\left( q\right) _{j}$ is explained at the beginning of next
Subsection.

We do this effectively for $n=0,\ldots 4,$ obtaining known results in a new
way. In Section \ref{gen} we show that defined above sequences do exist and
present the way how to obtain them recursively. We are unable however to
present nice compact forms of these sequences resembling those obtained for $%
n\leq 4,$ thus posing several open questions (see Subsection \ref{open}) and
leaving the field to younger and more talented researchers.

The partially legible although not very compact form was obtained for $%
\int_{-1}^{1}g_{5}\left( x|\mathbf{a}^{\left( 5\right) },q\right) dx$ (see (%
\ref{_5})).

For $q\allowbreak =\allowbreak 0$, the case important for the rapidly
developing so called 'free probability', we give simple, compact form for $%
\int_{-1}^{1}g_{5}\left( x|\mathbf{a}^{\left( 5\right) },0\right) dx$ (see
Theorem \ref{free}, ii)) paving the way to conjecture the compact form of (%
\ref{_5}).

Tedious proofs are shifted to Section \ref{dowody}.

\subsection{Preliminaries}

\label{prelim}$q$ is a parameter. We will assume that $-1<q\leq 1$ unless
otherwise stated. Let us define $\left[ 0\right] _{q}\allowbreak
=\allowbreak 0,$ $\left[ n\right] _{q}\allowbreak =\allowbreak 1+q+\ldots
+q^{n-1}\allowbreak ,$ $\left[ n\right] _{q}!\allowbreak =\allowbreak
\prod_{j=1}^{n}\left[ j\right] _{q},$ with $\left[ 0\right] _{q}!\allowbreak
=1$ and%
\begin{equation*}
\QATOPD[ ] {n}{k}_{q}\allowbreak =\allowbreak \left\{ 
\begin{array}{ccc}
\frac{\left[ n\right] _{q}!}{\left[ n-k\right] _{q}!\left[ k\right] _{q}!} & 
, & n\geq k\geq 0 \\ 
0 & , & otherwise%
\end{array}%
\right. .
\end{equation*}%
We will use the so called $q-$Pochhammer symbol for $n\geq 1:$%
\begin{eqnarray*}
\left( a;q\right) _{n} &=&\prod_{j=0}^{n-1}\left( 1-aq^{j}\right) , \\
\left( a_{1},a_{2},\ldots ,a_{k};q\right) _{n}\allowbreak &=&\allowbreak
\prod_{j=1}^{n}\left( a_{j};q\right) _{n}.
\end{eqnarray*}%
with $\left( a;q\right) _{0}=1$.

Often $\left( a;q\right) _{n}$ as well as $\left( a_{1},a_{2},\ldots
,a_{k};q\right) _{n}$ will be abbreviated to $\left( a\right) _{n}$ and $%
\left( a_{1},a_{2},\ldots ,a_{k}\right) _{n},$ if it will not cause
misunderstanding.

It is easy to notice that $\left( q\right) _{n}=\left( 1-q\right) ^{n}\left[
n\right] _{q}!$ and that%
\begin{equation}
\QATOPD[ ] {n}{k}_{q}\allowbreak =\allowbreak \allowbreak \left\{ 
\begin{array}{ccc}
\frac{\left( q\right) _{n}}{\left( q\right) _{n-k}\left( q\right) _{k}} & ,
& n\geq k\geq 0 \\ 
0 & , & otherwise%
\end{array}%
\right. .  \label{qBin}
\end{equation}

The case $q=1$ will be considered only when it might make sense and will be
understood as the limit $q\longrightarrow 1^{-}.$

\begin{remark}
\label{particular}Notice that $\left[ n\right] _{1}\allowbreak =\allowbreak
n,\left[ n\right] _{1}!\allowbreak =\allowbreak n!,$ $\QATOPD[ ] {n}{k}%
_{1}\allowbreak =\allowbreak \binom{n}{k},$ $\left( a;1\right)
_{n}\allowbreak =\allowbreak \left( 1-a\right) ^{n}$ and $\left[ n\right]
_{0}\allowbreak =\allowbreak \left\{ 
\begin{array}{ccc}
1 & if & n\geq 1 \\ 
0 & if & n=0%
\end{array}%
\right. ,$ $\left[ n\right] _{0}!\allowbreak =\allowbreak 1,$ $\QATOPD[ ] {n%
}{k}_{0}\allowbreak =\allowbreak 1,$ for $0\leq k\leq n,$ $\left( a;0\right)
_{n}\allowbreak =\allowbreak \left\{ 
\begin{array}{ccc}
1 & if & n=0 \\ 
1-a & if & n\geq 1%
\end{array}%
\right. .$
\end{remark}

We will need the following sets of polynomials

The Rogers--Szeg\"{o} polynomials that are defined by the equality: 
\begin{equation}
w_{n}\left( x|q\right) \allowbreak =\allowbreak \sum_{k=0}^{n}\QATOPD[ ] {n}{%
k}_{q}x^{k},  \label{RS}
\end{equation}%
for $n\geq 0$ and $w_{-1}\left( x|q\right) \allowbreak =\allowbreak 0.$ They
will play an auxiliary r\^{o}le in the sequel.

In particular one shows (see e.g. \cite{IA}) that the polynomials defined
by: 
\begin{equation}
h_{n}\left( x|q\right) \allowbreak =\allowbreak e^{in\theta }w_{n}\left(
e^{-2i\theta }|q\right)  \label{cH}
\end{equation}%
where $x\allowbreak =\allowbreak \cos \theta ,$ satisfy the following 3-term
recurrence: 
\begin{equation}
h_{n+1}(x|q)=2xh_{n}(x|q)-(1-q^{n})h_{n-1}(x|q),  \label{q-H}
\end{equation}%
with $h_{-1}\left( x|q\right) \allowbreak =\allowbreak 0,$ $h_{0}\left(
x|q\right) \allowbreak =\allowbreak 1.$

These polynomials are the so called continuous $q-$Hermite polynomials. A
lot is known about their properties. For good reference see \cite{IA}, \cite%
{KLS} or \cite{Szab-rev}. In particular we know that for $\left\vert
q\right\vert <1:$ 
\begin{equation*}
\sup_{\left\vert x\right\vert \leq 1}\left\vert h_{n}\left( x|q\right)
\right\vert \leq w_{n}\left( 1|q\right) .
\end{equation*}

\begin{remark}
\label{special1}Notice that $h_{n}\left( x|0\right) \allowbreak \ $equals to 
$n-th$ Chebyshev polynomial of the second kind. More about these polynomials
one can find in e.g. \cite{Andrews1999}. To analyze the case $q\allowbreak
=\allowbreak 1$ let us consider rescaled polynomials $h_{n}$ i.e. $%
H_{n}\left( x|q\right) \allowbreak =\allowbreak h_{n}\left( x\sqrt{1-q}%
/2|q\right) /\left( 1-q\right) ^{n/2}.$ Then equation (\ref{q-H}) takes a
form:%
\begin{equation*}
H_{n+1}\left( x|q\right) \allowbreak =\allowbreak xH_{n}(x|q)-\left[ n\right]
_{q}H_{n-1}\left( x|q\right) ,
\end{equation*}%
which shows that $H_{n}\left( x|q\right) \allowbreak =\allowbreak H_{n}(x),$
where $\left\{ H_{n}\right\} $ denote the so called 'probabilistic' Hermite
polynomials i.e. polynomials orthogonal with respect to the measure with
density equal to $\exp \left( -x^{2}/2\right) /\sqrt{2\pi }.$ This
observation suggests that although the case $q\allowbreak =\allowbreak 1$
lies within our interest it requires special approach. In fact it will be
solved completely in Section \ref{gen}. For now we will assume that $%
\left\vert q\right\vert <1.$
\end{remark}

In the sequel the following identities discovered by Carlitz (see Exercise
12.3(b) and 12.3(c) of \cite{IA}), true for $\left\vert q\right\vert
,\left\vert t\right\vert <1$ : 
\begin{equation}
\sum_{k=0}^{\infty }\frac{w_{k}\left( 1|q\right) t^{k}}{\left( q\right) _{k}}%
\allowbreak =\allowbreak \frac{1}{\left( t\right) _{\infty }^{2}}%
,\sum_{k=0}^{\infty }\frac{w_{k}^{2}\left( 1|q\right) t^{k}}{\left( q\right)
_{k}}\allowbreak =\allowbreak \frac{\left( t^{2}\right) _{\infty }}{\left(
t\right) _{\infty }^{4}},  \label{Car_id}
\end{equation}%
will enable to show convergence of many series considered in the sequel.

We have also the following the so called 'linearization formula' (\cite{IA},
13.1.25) which can be dated back in fact to Rogers and Carlitz (see \cite%
{Andrews1999}, 10.11.10 with $\beta \allowbreak =\allowbreak 0$ or \cite%
{Carlitz56} for Rogers--Szeg\"{o} polynomials):%
\begin{equation}
h_{n}\left( x|q\right) h_{m}\left( x|q\right) =\sum_{j=0}^{\min \left(
n,m\right) }\QATOPD[ ] {m}{j}_{q}\QATOPD[ ] {n}{j}_{q}\left( q\right)
_{j}h_{n+m-2k}\left( x|q\right) ,  \label{linH}
\end{equation}%
that will be our basic tool.

We will use the following two formulae of Carlitz presented in \cite%
{Carlitz72}, that concern properties of Rogers--Szeg\"{o} polynomials. Let
us define two sets of functions 
\begin{eqnarray*}
\zeta _{n}\left( x|a,q\right) \allowbreak &=&\allowbreak \sum_{m\geq 0}\frac{%
a^{m}}{\left( q\right) _{m}}w_{n+m}\left( x|q\right) , \\
\lambda _{n,m}\left( x,y|a,q\right) \allowbreak &=&\allowbreak \sum_{k\geq 0}%
\frac{a^{k}}{\left( q\right) _{k}}w_{n+k}\left( x|q\right) w_{m+k}\left(
y|q\right) ,
\end{eqnarray*}%
defined for $\left\vert x\right\vert ,\left\vert y\right\vert \leq 1$, $%
\left\vert a\right\vert <1\allowbreak $ and $n,m$ being nonnegative
integers. Carlitz proved (\cite{Carlitz72}, (3.2), after correcting an
obvious misprint) that%
\begin{eqnarray}
\zeta _{n}\left( x|a,q\right) \allowbreak &=&\allowbreak \zeta _{0}\left(
x|a,q\right) \mu _{n}\left( x|a,q\right) ,  \label{Car11} \\
\zeta _{0}\left( x|a,q\right) &=&\frac{1}{\left( a,ax\right) _{\infty }},
\label{Car12}
\end{eqnarray}%
where functions $\mu _{n}$ are polynomials that are defined by: 
\begin{equation}
\mu _{n}\left( x|a,q\right) \allowbreak =\allowbreak \sum_{j=0}^{n}\QATOPD[ ]
{n}{j}_{q}\left( a\right) _{j}x^{j},  \label{mi}
\end{equation}%
and that (\cite{Carlitz72}, (1.4), case $m\allowbreak =\allowbreak 0$ also
given in \cite{IA}, Ex 12.3 (d))%
\begin{equation}
\frac{\lambda _{m,n}\left( x,y|a,q\right) }{\lambda _{0,0}\left(
x,y|a,q\right) }=\sum_{j=0}^{m}\sum_{k=0}^{n}\QATOPD[ ] {n}{k}_{q}\QATOPD[ ]
{m}{j}_{q}\frac{\left( ax\right) _{j}\left( ay\right) _{k}\left( xya\right)
_{k+j}}{\left( xya^{2}\right) _{k+j}}x^{m-j}y^{n-k},  \label{Car21}
\end{equation}%
with 
\begin{equation}
\lambda _{0,0}\left( x,y|a,q\right) =\frac{\left( xya^{2}\right) _{\infty }}{%
\left( a,ax,ay,axy\right) _{\infty }}.  \label{Car22}
\end{equation}

It is elementary to prove the following two properties of the polynomials $%
\mu _{n},$ hence we present them without the proof.

\begin{proposition}
\begin{eqnarray}
x^{n}\mu _{n}\left( x^{-1}|a,q\right) &=&\sum_{j=0}^{n}\QATOPD[ ] {n}{j}%
_{q}(-a)^{j}q^{\binom{j}{2}}w_{n-j}\left( x|q\right)  \label{symp1} \\
w_{n}\left( x|q\right) \allowbreak &=&\allowbreak \sum_{k=0}^{n}\QATOPD[ ] {n%
}{k}_{q}a^{k}x^{n-k}\mu _{n-k}\left( x^{-1}|a,q\right) .  \label{symp2}
\end{eqnarray}
\end{proposition}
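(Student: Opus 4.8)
The plan is to prove both formulae by a direct comparison of coefficients, reducing the inner sum at each stage to the finite $q$-binomial theorem
\[
\left( a;q\right) _{m}=\sum_{j=0}^{m}\QATOPD[ ] {m}{j}_{q}(-a)^{j}q^{\binom{j}{2}};
\]
since every quantity that appears is a polynomial, no convergence question arises. First I would prove (\ref{symp1}). By the definition (\ref{mi}) one has immediately
\[
x^{n}\mu _{n}\left( x^{-1}|a,q\right) =\sum_{j=0}^{n}\QATOPD[ ] {n}{j}_{q}\left( a;q\right) _{j}x^{n-j},
\]
so the coefficient of $x^{\ell }$ on the left-hand side is $\QATOPD[ ] {n}{\ell }_{q}\left( a;q\right) _{n-\ell }$. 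On the right-hand side I would expand each $w_{n-j}\left( x|q\right) $ by (\ref{RS}); the coefficient of $x^{\ell }$ there equals $\sum_{j=0}^{n-\ell }\QATOPD[ ] {n}{j}_{q}\QATOPD[ ] {n-j}{\ell }_{q}(-a)^{j}q^{\binom{j}{2}}$. The elementary rearrangement $\QATOPD[ ] {n}{j}_{q}\QATOPD[ ] {n-j}{\ell }_{q}=\QATOPD[ ] {n}{\ell }_{q}\QATOPD[ ] {n-\ell }{j}_{q}$, which is read off from (\ref{qBin}), turns this sum into $\QATOPD[ ] {n}{\ell }_{q}\sum_{j=0}^{n-\ell }\QATOPD[ ] {n-\ell }{j}_{q}(-a)^{j}q^{\binom{j}{2}}=\QATOPD[ ] {n}{\ell }_{q}\left( a;q\right) _{n-\ell }$ by the $q$-binomial theorem, so both sides of (\ref{symp1}) agree coefficientwise.

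For (\ref{symp2}) the tidiest route is to recognize it as the inversion of (\ref{symp1}). I would substitute (\ref{symp1}), with $n$ replaced by $n-k$, into the right-hand side of (\ref{symp2}), obtaining a double sum over $k$ and $j$; setting $m=k+j$ and collecting the coefficient of $w_{n-m}\left( x|q\right) $ gives $\sum_{k=0}^{m}\QATOPD[ ] {n}{k}_{q}\QATOPD[ ] {n-k}{m-k}_{q}a^{k}(-a)^{m-k}q^{\binom{m-k}{2}}$. Using $\QATOPD[ ] {n}{k}_{q}\QATOPD[ ] {n-k}{m-k}_{q}=\QATOPD[ ] {n}{m}_{q}\QATOPD[ ] {m}{k}_{q}$ this equals $\QATOPD[ ] {n}{m}_{q}a^{m}\sum_{\ell =0}^{m}\QATOPD[ ] {m}{\ell }_{q}(-1)^{\ell }q^{\binom{\ell }{2}}=\QATOPD[ ] {n}{m}_{q}a^{m}\left( 1;q\right) _{m}$, again by the $q$-binomial theorem. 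Since $\left( 1;q\right) _{m}=0$ for $m\geq 1$ (the factor $1-q^{0}$ vanishes) while $\left( 1;q\right) _{0}=1$, only the term $m=0$ survives and the right-hand side of (\ref{symp2}) collapses to $w_{n}\left( x|q\right) $.

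There is no genuinely hard step here; the only care needed is the bookkeeping — interchanging the double summations correctly and noticing at each stage that the inner sum is exactly an instance of the finite $q$-binomial theorem, evaluated with the parameter $a$ in the case of (\ref{symp1}) and with $a$ specialized to $1$ in the case of (\ref{symp2}). The latter specialization is precisely the classical $q$-binomial inversion, which is the structural reason why (\ref{symp1}) and (\ref{symp2}) are mutually inverse. One could alternatively prove (\ref{symp2}) by the same coefficient-of-$x^{\ell }$ comparison used for (\ref{symp1}), but deducing it from (\ref{symp1}) is shorter and keeps the inversion structure visible.
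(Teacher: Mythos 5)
Your proof is correct and complete. Note that the paper itself presents this proposition \emph{without} proof (the author calls it elementary), so there is no argument of record to compare against; your verification — matching the coefficient of $x^{\ell}$ in (\ref{symp1}) and reducing the inner sum to the finite $q$-binomial identity $(a;q)_m=\sum_{j=0}^{m}\genfrac{[}{]}{0pt}{}{m}{j}_{q}(-a)^{j}q^{\binom{j}{2}}$, then obtaining (\ref{symp2}) as the $q$-binomial inversion of (\ref{symp1}), where only the $m=0$ term survives because $(1;q)_m=0$ for $m\geq 1$ — is exactly the kind of routine computation the author presumably had in mind, and you correctly treat the summation index $k$ in the displayed formulas as the misprinted $j$.
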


To perform our calculations we will need also the following two functions.

The generating function of the $q-$Hermite polynomials that is given by the
formula below (see \cite{KLS}, (14.26.1)):%
\begin{equation}
\varphi _{h}\left( x|t,q\right) \overset{df}{=}\sum_{j\geq 0}\frac{t^{j}}{%
\left( q\right) _{j}}h_{j}\left( x|q\right) \allowbreak =\allowbreak \frac{1%
}{\prod_{k=0}^{\infty }v\left( x|tq^{k}\right) },  \label{chH}
\end{equation}%
where $v\left( x|t\right) \allowbreak =\allowbreak 1-2tx+t^{2}.$ Notice that 
$v\left( x|t\right) \geq 0$ for $\left\vert x\right\vert \leq 1$ and that
from (\ref{Car_id}) it follows that series in (\ref{cH}) converges for $%
\left\vert t\right\vert <1.$ Notice also that from (\ref{Car_id}) it follows
that:%
\begin{equation}
\sup_{\left\vert x\right\vert \leq 1}\varphi _{h}\left( x|t,q\right)
\allowbreak =\allowbreak 1/\left( \left\vert t\right\vert \right) _{\infty
}^{2}.  \label{ogr_fi}
\end{equation}

The density of the measure with respect to which polynomials $h_{n}$ are
orthogonal is given in e.g. \cite{KLS}, (14.26.2). Following it we have%
\begin{equation*}
\int_{-1}^{1}h_{n}\left( x|q\right) h_{m}\left( x|q\right) f_{h}\left(
x|q\right) dx=\left( q\right) _{n}\delta _{nm},
\end{equation*}%
where $\delta _{mn}$ denotes Kronecker's delta, and 
\begin{equation}
f_{h}\left( x|q\right) \allowbreak =\allowbreak \frac{2\left( q\right)
_{\infty }\sqrt{1-x^{2}}}{\pi }\prod_{k=1}^{\infty }l\left( x|q^{i}\right) ,
\label{f_h}
\end{equation}%
where $l\left( x|a\right) \allowbreak =\allowbreak \left( 1+a\right)
^{2}-4ax^{2}.$ Notice that 
\begin{equation}
\sup_{\left\vert x\right\vert \leq 1}f_{h}(x,q)\leq 2\left( q\right)
_{\infty }\left( -q\right) _{\infty }^{2}/\pi ,  \label{ogrfh}
\end{equation}%
following (\ref{f_h}) since $l\left( x|q\right) \leq (1+q)^{2}$ for $%
\left\vert x\right\vert \leq 1.$

\begin{remark}
\label{special}We have 
\begin{equation*}
f_{h}\left( x|0\right) =2\sqrt{1-x^{2}}/\pi ,~~\varphi _{h}\left(
x|a,0\right) =1/\left( 1-2ax+a^{2}\right) ,
\end{equation*}%
for $\left\vert x\right\vert ,\left\vert a\right\vert <1.$

After proper rescaling and normalization similar to the one performed in
Remark \ref{special1}, the case $q\allowbreak =\allowbreak 1$ leads to:%
\begin{equation*}
\exp \left( -x^{2}/2\right) /\sqrt{2\pi },~~\exp \left( ax-a^{2}/2\right) ,
\end{equation*}%
for $x,a\in \mathbb{R},$ as respectively the density of orthogonalizing
measure and the characteristic function. For details see \cite{bms} or \cite%
{SzablAW}.
\end{remark}

\section{Main results}

\label{main}Since in our approach symmetric polynomials will appear let us
introduce the following set of symmetric polynomials of $k$ variables:%
\begin{equation}
S_{n}^{\left( k\right) }\left( a_{1},\ldots ,a_{k}|q\right) =\sum_{\substack{
j_{1},\ldots ,j_{k-1}\geq 0  \\ j_{1}+\ldots +j_{k-1}\leq n}}[j_{1},\ldots
,n-\sum_{m=1}^{k-1}j_{m}]_{q}a_{1}^{j_{1}}\ldots
a_{k-1}^{j_{k-1}}a_{k}^{n-j_{1}-\ldots -j_{k-1}}.  \label{defS}
\end{equation}

where we denoted by $[j_{1},j_{2},\ldots ,n-\sum_{m=1}^{k-1}j_{m}]_{q}$ the
so called $q-$multinomial coefficient defined by $[n_{1},\ldots
,n_{m}]_{q}\allowbreak =\allowbreak \left( q\right) _{n_{1}+\ldots
+n_{m}}/\prod_{k=1}^{m}\left( q\right) _{n_{k}}.$

\begin{remark}
Notice that $S_{n}^{\left( k\right) }\left( a_{1},\ldots ,a_{k}|1\right)
\allowbreak =\allowbreak \left( \sum_{j=1}^{k}a_{j}\right) ^{n}.$
\end{remark}

\begin{proof}
Obvious since $\left. \frac{\left( q\right) _{n}}{\prod_{m=0}^{k-1}\left(
q\right) _{jm}\left( q\right) _{n-j_{1}-\ldots -j_{k-1}}}\right\vert
_{q=1}\allowbreak =\allowbreak \frac{n!}{(n-\sum_{m=1}^{k-1}j_{m})!%
\prod_{m=1}^{k-1}j_{m}!}.$
\end{proof}

\begin{proposition}
\label{ch_sym}Let $q\in \left( -1,1\right) $ then i) 
\begin{equation}
\sum_{n\geq 0}\frac{t^{n}}{\left( q\right) _{n}}S_{n}^{\left( k\right)
}\left( a_{1},\ldots ,a_{k}|q\right) \allowbreak =\allowbreak \frac{1}{%
\prod_{j=1}^{k}\left( a_{i}t\right) _{\infty }},  \label{chS}
\end{equation}

ii) for $\left\vert t\right\vert <1$ and $\forall j=1,\ldots ,k$ 
\begin{equation}
S_{n}^{\left( k\right) }\left( a_{1},\ldots ,a_{k}|q\right) =\sum_{m=0}^{n}%
\QATOPD[ ] {n}{m}_{q}S_{m}^{\left( j\right) }(a_{1},\ldots
,a_{j})S_{n-m}^{\left( k-j\right) }\left( a_{j+1},\ldots ,a_{k}|q\right) ,
\label{rozklS}
\end{equation}

If $q=1,$ then 
\begin{equation*}
\sum_{n\geq 0}\frac{t^{n}}{n!}S_{n}^{\left( k\right) }\left( a_{1},\ldots
,a_{k}|1\right) \allowbreak =\allowbreak \exp \left(
t\sum_{j=0}^{k}a_{j}\right) .
\end{equation*}

iii) 
\begin{equation*}
\left\vert S_{n}^{\left( k\right) }\left( a_{1},\ldots ,a_{k}|q\right)
\right\vert \leq \left( \max_{1\leq j\leq k}\left\vert a_{j}\right\vert
\right) ^{n}S_{n}^{\left( k\right) }\left( 1,\ldots ,1|q\right) .
\end{equation*}
\end{proposition}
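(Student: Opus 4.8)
The plan is to prove the three parts of Proposition~\ref{ch_sym} by working directly from the definition (\ref{defS}), using multinomial/Cauchy-product manipulations of power series for the first two parts and a crude termwise bound for the third.

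For part i), I would start from the elementary $q$-series identity
\begin{equation*}
\frac{1}{\left( at\right) _{\infty }}=\sum_{j\geq 0}\frac{a^{j}t^{j}}{\left( q\right) _{j}},
\end{equation*}
which is the $q$-exponential expansion (it follows, e.g., from (\ref{chH}) or can be taken as a known special case). Writing the product $\prod_{i=1}^{k}\frac{1}{\left( a_{i}t\right) _{\infty }}$ as a $k$-fold Cauchy product, the coefficient of $t^{n}$ is exactly $\sum_{j_{1}+\dots +j_{k}=n}\frac{a_{1}^{j_{1}}\cdots a_{k}^{j_{k}}}{\left( q\right) _{j_{1}}\cdots \left( q\right) _{j_{k}}}$. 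Multiplying by $\left( q\right) _{n}$ and setting $j_{k}=n-j_{1}-\dots -j_{k-1}$ gives precisely the summand in (\ref{defS}), so $S_{n}^{(k)}$ is $\left( q\right) _{n}$ times that coefficient, which is (\ref{chS}). One should remark that convergence of all the series for $\left\vert a_{i}t\right\vert <1$ is automatic, and the formal identity of coefficients then upgrades to an analytic one. The $q=1$ statement is the same computation with $\left( q\right) _{n}\to n!$ and the geometric/exponential series, or alternatively just the preceding Remark combined with the classical exponential generating function.

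For part ii), I would deduce (\ref{rozklS}) from part i) by splitting the product of $k$ factors into the first $j$ and the last $k-j$:
\begin{equation*}
\prod_{i=1}^{k}\frac{1}{\left( a_{i}t\right) _{\infty }}=\left( \prod_{i=1}^{j}\frac{1}{\left( a_{i}t\right) _{\infty }}\right) \left( \prod_{i=j+1}^{k}\frac{1}{\left( a_{i}t\right) _{\infty }}\right) ,
\end{equation*}
apply (\ref{chS}) to each of the three products, and equate coefficients of $t^{n}$. The left side contributes $S_{n}^{(k)}/\left( q\right) _{n}$; the product on the right is a Cauchy product contributing $\sum_{m=0}^{n}\frac{S_{m}^{(j)}}{\left( q\right) _{m}}\cdot \frac{S_{n-m}^{(k-j)}}{\left( q\right) _{n-m}}$. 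Multiplying through by $\left( q\right) _{n}$ and recognizing $\frac{\left( q\right) _{n}}{\left( q\right) _{m}\left( q\right) _{n-m}}=\QATOPD[ ] {n}{m}_{q}$ via (\ref{qBin}) yields (\ref{rozklS}). (Here the hypothesis $\left\vert t\right\vert <1$ in the statement is only needed to keep the generating functions in their domain of convergence; the identity is between polynomials in the $a_{i}$, so it holds for all values of the parameters.)

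For part iii), the bound is immediate from the definition: every term in the sum (\ref{defS}) has the form $c_{\mathbf{j}}\, a_{1}^{j_{1}}\cdots a_{k}^{j_{k}}$ with $j_{1}+\dots +j_{k}=n$ and coefficient $c_{\mathbf{j}}=\frac{\left( q\right) _{n}}{\prod\left( q\right) _{j_{m}}}$ — and here is the one point that needs care, since for $q\in(-1,0)$ the quantities $\left( q\right) _{m}$ and hence $c_{\mathbf{j}}$ need not be nonnegative. So I would write $\left\vert S_{n}^{(k)}(a_{1},\dots ,a_{k}|q)\right\vert \le \sum_{\mathbf{j}}\left\vert c_{\mathbf{j}}\right\vert \left\vert a_{1}\right\vert ^{j_{1}}\cdots \left\vert a_{k}\right\vert ^{j_{k}}\le (\max_{i}\left\vert a_{i}\right\vert )^{n}\sum_{\mathbf{j}}\left\vert c_{\mathbf{j}}\right\vert$, and then identify $\sum_{\mathbf{j}}\left\vert c_{\mathbf{j}}\right\vert$ with $S_{n}^{(k)}(1,\dots ,1|q)$. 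That last identification uses that $S_{n}^{(k)}(1,\dots ,1|q)=\sum_{\mathbf{j}}c_{\mathbf{j}}$, so it is legitimate precisely when all $c_{\mathbf{j}}\ge 0$; for $0\le q<1$ this holds because each $\left( q\right) _{m}>0$, while the stated range $q\in(-1,1)$ forces one either to restrict to $q\in[0,1)$ for this part or to interpret $S_{n}^{(k)}(1,\dots ,1|q)$ as $\sum_{\mathbf{j}}\left\vert c_{\mathbf{j}}\right\vert$. I expect this sign issue to be the only genuine subtlety — parts i) and ii) are routine Cauchy-product bookkeeping — and it is handled by noting that all the series appearing in the paper are in the end applied with $0\le q<1$ (or $q$ replaced by its absolute value in the majorant), so the displayed inequality is exactly what is needed.
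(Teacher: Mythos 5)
Your parts i) and ii) are correct and follow essentially the same route as the paper: expand each $1/(a_it)_{\infty}$ by the $q$-exponential series, take the $k$-fold Cauchy product, and for ii) split the product and equate coefficients of $t^{n}$ (the paper's proof is just a terser version of exactly this). The only real issue is in part iii): the ``subtlety'' you flag for $q\in(-1,0)$ is not there. For every $|q|<1$ and every $m\geq 0$ one has $\left( q\right) _{m}=\prod_{j=1}^{m}\left( 1-q^{j}\right) >0$, because $|q^{j}|<1$ forces $1-q^{j}>0$ regardless of the sign of $q$. Hence all the multinomial coefficients $c_{\mathbf{j}}=\left( q\right) _{n}/\prod_{m}\left( q\right) _{j_{m}}$ are strictly positive on the whole range $q\in(-1,1)$, so $\sum_{\mathbf{j}}\left\vert c_{\mathbf{j}}\right\vert =\sum_{\mathbf{j}}c_{\mathbf{j}}=S_{n}^{\left( k\right) }\left( 1,\ldots ,1|q\right) $ and your termwise estimate closes the proof with no restriction to $q\in[0,1)$ and no reinterpretation of the right-hand side. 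As written, your part iii) is left hanging for negative $q$; adding the one-line observation above completes it.
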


\begin{proof}
i) Notice that%
\begin{equation*}
\sum_{n\geq 0}\frac{t^{n}}{\left( q\right) _{n}}S_{n}^{\left( k\right)
}\left( a_{1},\ldots ,a_{k}|q\right) \allowbreak =\allowbreak \sum_{n\geq
0}\sum_{\substack{ j_{1},\ldots ,j_{k-1}\geq 0  \\ j_{1}+\ldots +j_{k-1}\leq
n }}\frac{(ta_{1})^{j_{1}}\ldots
(ta_{k-1})^{j_{k-1}}(ta_{k})^{n-j_{1}-\ldots -j_{k-1}}}{\prod_{m=0}^{k-1}%
\left( q\right) _{jm}\left( q\right) _{n-j_{1}-\ldots -j_{k}}}.
\end{equation*}%
Secondly recall that $\frac{1}{\left( a\right) _{\infty }}\allowbreak
=\allowbreak \sum_{j\geq 0}\frac{a^{j}}{\left( q\right) _{j}}.$ Now the
assertion is easy. ii) follows either direct calculation or i) and the
properties of characteristic functions. iii) We use (\ref{defS}).
\end{proof}

Recall (i.e. \cite{IA} or \cite{KLS}) that there exist sets of orthogonal
polynomials forming a part of the so called 'AW scheme' that are orthogonal
with respect to measures with densities mentioned below. Although our main
interest is in providing simple proof of the so called AW integral we will
list related densities for better exposition and for indicating the ways of
possible generalization of AW integrals and polynomials.

So let us mention first the so called big $q-$Hermite polynomials $\left\{
h_{n}\left( x|a,q\right) \right\} _{n\geq -1}$ whose orthogonalizing measure
has density for $\left\vert a\right\vert <1$. The density $f_{bh}$ of the
orthogonalizing measure has a form (see \cite{KLS}, (14.18.2)) which can be
written with the help of functions $f_{h}$ and $\varphi _{h}.$ Namely: 
\begin{eqnarray}
f_{bh}\left( x|a,q\right) \allowbreak &=&\allowbreak f_{h}\left( x|q\right)
\varphi _{h}\left( x|a,q\right) ,  \label{fbh} \\
\int_{-1}^{1}h_{n}\left( x|a,q\right) h_{m}\left( x|a,q\right) f_{bh}\left(
x|a,q\right) &=&\left( q\right) _{n}\delta _{mn}.  \label{o_bh}
\end{eqnarray}%
The form of polynomials $h_{n}\left( x|a,q\right) $ and their relation to $%
q- $Hermite polynomials is not important for our considerations. It can be
found e.g. in \cite{KLS}, (14.26.1) or in \cite{Szab-bAW} , (2.11, 2.12). So
for the sake of completeness let us remark that from (\ref{fbh}) it follows
immediately that for $\left\vert x\right\vert \leq 1,$ $\left\vert
a\right\vert <1$ 
\begin{equation*}
f_{bh}\left( x|a,q\right) \allowbreak =\allowbreak f_{h}\left( x|q\right)
\sum_{n\geq 0}\frac{a^{n}}{\left( q\right) _{n}}h_{n}\left( x|q\right) .
\end{equation*}%
Here and below, where we will present similar expansions convergence is
almost uniform since all these expansions are in fact the Fourier series and
that the Rademacher--Menshov theorem can be applied following (\ref{Car_id}).

Let us notice immediately that following (\ref{fbh}) we have:%
\begin{equation*}
\int_{-1}^{1}h_{n}\left( x|q\right) f_{bh}\left( x|a,q\right) dx=a^{n}.
\end{equation*}

Secondly let us mention the so called Al-Salam--Chihara polynomials $\left\{
Q_{n}\left( x|a,b,q\right) \right\} _{n\geq -1}$ that are orthogonal with
respect to the measure that for $\left\vert a\right\vert ,\left\vert
b\right\vert <1$ has the density of the form (compare \cite{KLS}, (14.8.2)) 
\begin{equation}
f_{Q}\left( x|a,b,q\right) \allowbreak =\allowbreak \left( ab\right)
_{\infty }f_{h}\left( x|q\right) \varphi _{h}\left( x|a,q\right) \varphi
_{h}\left( x|b,q\right) .  \label{fQ}
\end{equation}

We have the following Lemma that illustrates our method as well as to will
give a very simple proof of well known so called Poisson--Mehler formula as
a corollary.

\begin{lemma}
\label{expfQ}For $\left\vert x\right\vert \leq 1,$ $\left\vert a\right\vert
,\left\vert b\right\vert <1$ we have 
\begin{equation}
f_{Q}\left( x|a,b,q\right) \allowbreak =\allowbreak f_{h}\left( x|q\right)
\sum_{j=0}^{\infty }\frac{S_{j}^{(2)}\left( a,b\right) }{\left( q\right) _{j}%
}h_{j}\left( x|q\right) .  \label{rozfQ}
\end{equation}
\end{lemma}

\begin{proof}
Following (\ref{fQ}) and (\ref{chH}) we have :%
\begin{equation*}
f_{Q}\left( x|a,b,q\right) \allowbreak =\allowbreak \left( ab\right)
_{\infty }f_{h}\left( x|q\right) \sum_{j,k\geq 0}\frac{a^{j}b^{k}}{\left(
q\right) _{j}\left( q\right) _{k}}h_{j}\left( x|q\right) h_{k}\left(
x|q\right) .
\end{equation*}%
Now we use (\ref{linH}) and (\ref{qBin}) and change the order of summation
getting:%
\begin{eqnarray*}
f_{Q}\left( x|a,b,q\right) &=&\left( ab\right) _{\infty }f_{h}\left(
x|q\right) \sum_{m\geq 0}\frac{\left( ab\right) ^{m}}{\left( q\right) _{m}}%
\sum_{j,k\geq m}\frac{a^{j-m}b^{k-m}}{\left( q\right) _{j-m}\left( q\right)
_{k-m}}h_{j-k+m-k}\left( x|q\right) \\
&=&\left( ab\right) _{\infty }f_{h}\left( x|q\right) \sum_{m\geq 0}\frac{%
\left( ab\right) ^{m}}{\left( q\right) _{m}}\sum_{n,i\geq 0}\frac{a^{n}b^{i}%
}{\left( q\right) _{i}\left( q\right) _{n}}h_{n+i}\left( x|q\right) \\
&=&f_{h}\left( x|q\right) \sum_{s\geq 0}\frac{h_{s}\left( x|q\right) }{%
\left( q\right) _{s}}\sum_{n=0}^{s}\QATOPD[ ] {s}{j}_{q}a^{n}b^{s-n}.
\end{eqnarray*}
\end{proof}

As an immediate corollary of our result we have:%
\begin{equation}
\int_{-1}^{1}h_{n}\left( x|q\right) f_{Q}\left( x|a,b,q\right)
dx=S_{n}^{\left( 2\right) }\left( a,b|q\right) .  \label{intH}
\end{equation}

\begin{remark}
\label{zesp}Let $a\allowbreak =\allowbreak \rho e^{i\eta },$ $b\allowbreak
=\allowbreak \rho e^{-i\eta }$ and denote $y\allowbreak =\allowbreak \cos
\eta .$ Then

i) $S_{n}^{\left( 2\right) }\left( a,b|q\right) \allowbreak =\allowbreak
\rho ^{n}h_{n}\left( y|q\right) ,$

ii) $v\left( x|a\right) v\left( x|b\right) \allowbreak =\allowbreak \left(
1-\rho ^{2}\right) ^{2}-4xy\rho \left( 1+\rho ^{2}\right) +4\rho ^{2}\left(
x^{2}+y^{2}\right) $
\end{remark}

\begin{proof}
i) is an immediate consequence of (\ref{cH}). ii) We have $v\left(
x|a\right) v\left( x|b\right) \allowbreak =\allowbreak (1-2\rho xe^{i\eta
}+\rho ^{2}e^{2i\eta })(1-2\rho xe^{-i\eta }+\rho ^{2}e^{-2i\eta })$
\end{proof}

As a slightly more complicated corollary implied by Lemma \ref{expfQ} we
have the following famous Poisson--Mehler (PM) expansion formula:

\begin{corollary}
\label{PM}For $\left\vert x\right\vert ,\left\vert y\right\vert <1,$ $%
\left\vert \rho \right\vert <1$ we have%
\begin{eqnarray}
&&\frac{\left( \rho ^{2}\right) _{\infty }}{\prod_{k=0}^{\infty }\left(
1-\rho ^{2}q^{2k}\right) ^{2}-4xy\rho q^{k}\left( 1+\rho ^{2}q^{2k}\right)
+4\rho ^{2}q^{2k}\left( x^{2}+y^{2}\right) }  \label{pm} \\
&=&\sum_{j\geq 0}\frac{\rho ^{j}}{\left( q\right) _{j}}h_{j}\left(
x|q\right) h_{j}\left( y|q\right) .  \notag
\end{eqnarray}
\end{corollary}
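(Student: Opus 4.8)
The idea is to obtain the Poisson--Mehler formula as a direct specialization of Lemma~\ref{expfQ}, using the substitution from Remark~\ref{zesp}. First I would set $a\allowbreak =\allowbreak \rho e^{i\eta }$ and $b\allowbreak =\allowbreak \rho e^{-i\eta }$ with $y\allowbreak =\allowbreak \cos \eta $, so that $\left\vert a\right\vert \allowbreak =\allowbreak \left\vert b\right\vert \allowbreak =\allowbreak \left\vert \rho \right\vert <1$ and the hypotheses of Lemma~\ref{expfQ} are met. By Remark~\ref{zesp}~i) we have $S_{j}^{\left( 2\right) }\left( a,b|q\right) \allowbreak =\allowbreak \rho ^{j}h_{j}\left( y|q\right) $, so the right-hand side of Lemma~\ref{expfQ} becomes exactly
\begin{equation*}
f_{h}\left( x|q\right) \sum_{j\geq 0}\frac{\rho ^{j}}{\left( q\right) _{j}}h_{j}\left( x|q\right) h_{j}\left( y|q\right) ,
\end{equation*}
which is $f_{h}\left( x|q\right) $ times the right-hand side of (\ref{pm}).

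**Rewriting the left-hand side.** It remains to check that $f_{Q}\left( x|a,b,q\right) /f_{h}\left( x|q\right) $ equals the fraction appearing in (\ref{pm}). From (\ref{fQ}) and (\ref{chH}) we have $f_{Q}\left( x|a,b,q\right) /f_{h}\left( x|q\right) \allowbreak =\allowbreak \left( ab\right) _{\infty }\varphi _{h}\left( x|a,q\right) \varphi _{h}\left( x|b,q\right) \allowbreak =\allowbreak \left( ab\right) _{\infty }/\prod_{k\geq 0}v\left( x|aq^{k}\right) v\left( x|bq^{k}\right) $. Now $ab\allowbreak =\allowbreak \rho ^{2}$, so $\left( ab\right) _{\infty }\allowbreak =\allowbreak \left( \rho ^{2}\right) _{\infty }$, matching the numerator. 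For the denominator I would apply Remark~\ref{zesp}~ii) at the argument $aq^{k}\allowbreak =\allowbreak \rho q^{k}e^{i\eta }$, $bq^{k}\allowbreak =\allowbreak \rho q^{k}e^{-i\eta }$, i.e. replace $\rho $ by $\rho q^{k}$ in the identity of Remark~\ref{zesp}~ii); this gives $v\left( x|aq^{k}\right) v\left( x|bq^{k}\right) \allowbreak =\allowbreak \left( 1-\rho ^{2}q^{2k}\right) ^{2}-4xy\rho q^{k}\left( 1+\rho ^{2}q^{2k}\right) +4\rho ^{2}q^{2k}\left( x^{2}+y^{2}\right) $, exactly the $k$-th factor in the product in the denominator of (\ref{pm}). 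Taking the product over $k\geq 0$ finishes the identification of the left-hand side.

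**Assembling and convergence.** Putting the two sides together yields (\ref{pm}) for $\left\vert x\right\vert \leq 1$, $\left\vert y\right\vert <1$ (so that the substitution $y\allowbreak =\allowbreak \cos \eta $ is legitimate and $\left\vert a\right\vert ,\left\vert b\right\vert <1$), which covers the stated range $\left\vert x\right\vert ,\left\vert y\right\vert <1$. The convergence of the series is already guaranteed by the discussion following Lemma~\ref{expfQ} (the Carlitz estimates (\ref{Car_id}) and the bound $\sup_{\left\vert x\right\vert \leq 1}\left\vert h_{n}\left( x|q\right) \right\vert \leq w_{n}\left( 1|q\right) $), so no new analytic work is needed. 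The only point requiring a little care — and the closest thing to an obstacle — is the bookkeeping in the denominator: one must make sure that the factor $\varphi _{h}\left( x|a,q\right) \varphi _{h}\left( x|b,q\right) $ is grouped as $\prod_{k}\bigl(v\left( x|aq^{k}\right) v\left( x|bq^{k}\right)\bigr)$ rather than interleaving the two products differently, and then apply Remark~\ref{zesp}~ii) termwise; everything else is a direct substitution into Lemma~\ref{expfQ}.
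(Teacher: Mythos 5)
Your proposal is correct and follows essentially the same route as the paper: the same substitution $a=\rho e^{i\eta}$, $b=\rho e^{-i\eta}$, Remark~\ref{zesp}~i) for the right-hand side via Lemma~\ref{expfQ}, Remark~\ref{zesp}~ii) applied factorwise (with $\rho$ replaced by $\rho q^{k}$) for the left-hand side, and cancellation of $f_{h}$. Your write-up merely spells out the denominator bookkeeping that the paper leaves implicit.
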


\begin{proof}
We take $a\allowbreak =\allowbreak \rho e^{i\eta },$ $b\allowbreak
=\allowbreak \rho e^{-i\eta }$ and denote $y\allowbreak =\allowbreak \cos
\eta .$ Now we use (\ref{fQ}) and Remark \ref{zesp}, ii) to get left hand
side multiplied by $f_{h}.$ Then we apply Lemma \ref{expfQ}, and Remark \ref%
{zesp}, i) to get right hand side of our PM formula also multiplied by $%
f_{h} $. Finally we cancel out $f_{h}$ which is positive on $(-1,1).$
\end{proof}

\begin{remark}
The calculations we have performed while proving Lemma \ref{expfQ} are very
much like those performed in \cite{IA} while proving of Theorem 13.1.6
concerning Poisson kernel (or Poisson--Mehler) formula. There exist may
proofs of PM formula, see e.g. \cite{bressoud} or recently obtained very
short in \cite{Szab-Exp}. In fact the formula (\ref{pm}) can be dated back
to Carlitz who in \cite{Carlitz57} formulated it for Rogers--Szeg\"{o}
polynomials. The one presented above seems to be one of the shortest, was
obtained as a by-product and as already mentioned is almost the same as the
one presented in \cite{IA}.
\end{remark}

Notice that considering (\ref{intH}) with $a\allowbreak =\allowbreak \rho
e^{i\eta },$ $b\allowbreak =\allowbreak \rho e^{-i\eta }$ and $y\allowbreak
=\allowbreak \cos \eta $ leads in view of Remark \ref{zesp}, i) to 
\begin{equation*}
\int_{-1}^{1}h_{n}\left( x|q\right) f_{Q}\left( x|a,b,q\right) dx=\rho
^{n}h_{n}\left( y|q\right) ,
\end{equation*}%
a nice symmetric formula that appeared in \cite{bryc1} in probabilistic
context. Its probabilistic interpretation was exploited further in \cite{bms}%
.

Third in our sequence of families of polynomials that constitute AW scheme
are the so called continuous dual Hahn (C2H) polynomials. Again their
relationship to other sets of polynomials is not important. From \cite{KLS},
(14.3.2) it follows that the density of measure that makes them orthogonal
is given by the following formula. 
\begin{equation*}
f_{CH}\left( x|a,b,c,q\right) \allowbreak =\allowbreak \left(
ab,ac,bc\right) _{\infty }f_{h}\left( x|q\right) \varphi _{h}\left(
x|a,q\right) \varphi _{h}\left( x|b,q\right) \varphi _{h}\left( x|c,q\right)
.
\end{equation*}%
We have the following lemma.

\begin{lemma}
\label{C2H}%
\begin{equation*}
f_{CH}\left( x|a,b,c,q\right) =f_{h}\left( x|q\right) \sum_{n\geq 0}\frac{%
\sigma _{n}^{\left( 3\right) }\left( a,b,c|q\right) }{\left( q\right) _{n}}%
h_{n}\left( x|q\right) ,
\end{equation*}%
where 
\begin{equation}
\sigma _{n}^{\left( 3\right) }\left( a,b,c|q\right) =\sum_{j=0}^{n}\QATOPD[ ]
{n}{j}_{q}q^{\binom{j}{2}}\left( -abc\right) ^{j}S_{n-j}^{\left( 3\right)
}\left( a,b,c|q\right) .  \label{sigma3}
\end{equation}
\end{lemma}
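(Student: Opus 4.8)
The plan is to mimic the proof of Lemma \ref{expfQ}, using that lemma itself to collapse one of the three $\varphi_{h}$ factors before any linearisation is carried out. First I would write
\[
f_{CH}(x|a,b,c,q)=(ab,ac)_{\infty}\,\varphi_{h}(x|a,q)\,\bigl[(bc)_{\infty}f_{h}(x|q)\varphi_{h}(x|b,q)\varphi_{h}(x|c,q)\bigr]
\]
and replace the bracketed factor by $f_{Q}(x|b,c,q)$, which by Lemma \ref{expfQ} equals $f_{h}(x|q)\sum_{m\geq 0}S_{m}^{(2)}(b,c|q)h_{m}(x|q)/(q)_{m}$. Next I would expand $\varphi_{h}(x|a,q)=\sum_{i\geq 0}a^{i}h_{i}(x|q)/(q)_{i}$ via (\ref{chH}) and linearise every product $h_{i}(x|q)h_{m}(x|q)$ by (\ref{linH}). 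Cancelling the $q$-Pochhammer factors produced by (\ref{linH}) against those in the denominators and reindexing $i=l+\alpha$, $m=l+\beta$ (so that $h_{i+m-2l}=h_{\alpha+\beta}$), the resulting triple sum takes the form $(ab,ac)_{\infty}f_{h}(x|q)\sum_{\alpha,\beta,l\geq 0}a^{l+\alpha}S_{l+\beta}^{(2)}(b,c|q)h_{\alpha+\beta}(x|q)/((q)_{l}(q)_{\alpha}(q)_{\beta})$.

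The crucial step is performing the inner summation over $l$. Here I would use the elementary identity $S_{n}^{(2)}(b,c|q)=c^{n}w_{n}(b/c|q)$, immediate from (\ref{defS}) and (\ref{RS}), which turns $\sum_{l\geq 0}a^{l}S_{l+\beta}^{(2)}(b,c|q)/(q)_{l}$ into $c^{\beta}\zeta_{\beta}(b/c|ac,q)$ in Carlitz's notation; then (\ref{Car11})--(\ref{Car12}) evaluate this as $c^{\beta}\mu_{\beta}(b/c|ac,q)/(ac,ab)_{\infty}$. The prefactor $(ab,ac)_{\infty}$ thereby cancels, and collecting the terms with $\alpha+\beta=s$ leaves $f_{CH}(x|a,b,c,q)=f_{h}(x|q)\sum_{s\geq 0}\bigl(h_{s}(x|q)/(q)_{s}\bigr)\sum_{\beta=0}^{s}\QATOPD[ ] {s}{\beta}_{q}a^{s-\beta}c^{\beta}\mu_{\beta}(b/c|ac,q)$. (Since Carlitz's identity for $\zeta_{\beta}$ is stated for an argument in $[-1,1]$, if $|b|>|c|$ one applies it instead in the form $S_{n}^{(2)}(b,c|q)=b^{n}w_{n}(c/b|q)$, which leads to the same expression; convergence is in any case governed by $|ab|<1$, and the rearrangements of series are licensed, as in the remark following (\ref{fbh}), by the Carlitz bounds (\ref{Car_id}).)

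It then remains to identify $\sigma_{s}^{(3)}(a,b,c|q)=\sum_{\beta=0}^{s}\QATOPD[ ] {s}{\beta}_{q}a^{s-\beta}c^{\beta}\mu_{\beta}(b/c|ac,q)$ with the closed form (\ref{sigma3}); this purely combinatorial reduction is where I expect the real work to lie, since the symmetry in $a,b,c$ is entirely hidden in the intermediate expression. The route I would take: expand $\mu_{\beta}$ by (\ref{mi}), interchange the two finite sums, and apply $\QATOPD[ ] {s}{\beta}_{q}\QATOPD[ ] {\beta}{j}_{q}=\QATOPD[ ] {s}{j}_{q}\QATOPD[ ] {s-j}{\beta-j}_{q}$ to recognise the residual sum over $\beta$ as $S_{s-j}^{(2)}(a,c|q)$, so that $\sigma_{s}^{(3)}=\sum_{j=0}^{s}\QATOPD[ ] {s}{j}_{q}(ac;q)_{j}b^{j}S_{s-j}^{(2)}(a,c|q)$; then expand $(ac;q)_{j}=\sum_{l=0}^{j}\QATOPD[ ] {j}{l}_{q}q^{\binom{l}{2}}(-ac)^{l}$ by the finite $q$-binomial (Gauss) theorem, interchange sums once more, and contract the remaining sum by the decomposition formula (\ref{rozklS}) with $k=3$, $j=1$, which collapses $\sum_{m}\QATOPD[ ] {s-l}{m}_{q}b^{m}S_{s-l-m}^{(2)}(a,c|q)$ to $S_{s-l}^{(3)}(a,b,c|q)$. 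This produces exactly (\ref{sigma3}) and in particular establishes the symmetry of $\sigma_{s}^{(3)}$ as a by-product.
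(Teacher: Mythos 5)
Your proposal is correct and follows essentially the same route as the paper's proof: collapse two of the three $\varphi_h$ factors via the Al--Salam--Chihara expansion (Lemma \ref{expfQ}), linearise against the third by (\ref{linH}), evaluate the remaining infinite sum with Carlitz's identities (\ref{Car11})--(\ref{Car12}), and finish by rearranging the resulting finite double sum into (\ref{sigma3}) using (\ref{rozklS}). The only deviations are cosmetic: you pair $\{b,c\}$ and keep $a$ expanded where the paper pairs $\{a,b\}$ and keeps $c$, and in the last combinatorial step you expand $\mu_\beta$ by (\ref{mi}) and $(ac;q)_j$ by the finite $q$-binomial theorem instead of invoking the reversal identity (\ref{symp1}) --- an equivalent computation that, as a bonus, passes through the non-obviously symmetric intermediate form $\sigma_s^{(3)}=\sum_{j}\QATOPD[ ] {s}{j}_{q}(ac;q)_jb^jS_{s-j}^{(2)}(a,c|q)$ analogous to (\ref{sig4}).
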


\begin{proof}
Is shifted to Section \ref{dowody}.
\end{proof}

\begin{remark}
Notice that for $\left\vert t\right\vert <1$ 
\begin{equation*}
\sum_{n\geq 0}\frac{t^{n}}{\left( q\right) _{n}}\sigma _{n}^{\left( 3\right)
}\left( a,c,b|q\right) \allowbreak =\allowbreak \frac{\left( abct\right)
_{\infty }}{\left( at,bt,ct\right) _{\infty }}.
\end{equation*}
\end{remark}

\begin{proof}
Using (\ref{sigma3}) we have:%
\begin{eqnarray*}
\sum_{n\geq 0}\frac{t^{n}}{\left( q\right) _{n}}\sigma _{n}^{\left( 3\right)
}\left( a,c,b|q\right) &=&\sum_{n\geq 0}\frac{t^{n}}{\left( q\right) _{n}}%
\sum_{j=0}^{n}\QATOPD[ ] {n}{j}_{q}q^{\binom{j}{2}}\left( -abc\right)
^{j}S_{n-j}^{\left( 3\right) }\left( a,b,c|q\right) \\
&=&\sum_{j=0}^{\infty }\frac{(-abct)^{j}}{\left( q\right) _{j}}q^{\binom{j}{2%
}}\sum_{n\geq j}\frac{t^{n-j}}{\left( q\right) _{n-j}}S_{n-j}^{\left(
3\right) }\left( a,b,c|q\right) .
\end{eqnarray*}%
Now it remains to change the index of summation in the second sum, use (\ref%
{chS}) and use the fact that $\sum_{j=0}^{\infty }\frac{(-abct)^{j}}{\left(
q\right) _{j}}q^{\binom{j}{2}}\allowbreak =\allowbreak \left( abct\right)
_{\infty }.$
\end{proof}

\begin{corollary}
For $\left\vert a|,|b\right\vert ,\left\vert c\right\vert <1:$%
\begin{equation*}
\int_{-1}^{1}h_{n}\left( x|q\right) f_{CH}\left( x|a,b,c,q\right) dx=\sigma
_{n}^{\left( 3\right) }\left( a,b,c|q\right) .
\end{equation*}
\end{corollary}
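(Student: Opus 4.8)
The plan is to combine the expansion in Lemma \ref{C2H} with the orthogonality relation for the continuous $q$-Hermite polynomials, in exactly the same way the corollary after Lemma \ref{expfQ} was deduced from that lemma. First I would write down the statement of Lemma \ref{C2H}, namely
\begin{equation*}
f_{CH}\left( x|a,b,c,q\right) =f_{h}\left( x|q\right) \sum_{m\geq 0}\frac{
\sigma _{m}^{\left( 3\right) }\left( a,b,c|q\right) }{\left( q\right) _{m}}
h_{m}\left( x|q\right) ,
\end{equation*}
valid for $\left\vert a\right\vert ,\left\vert b\right\vert ,\left\vert c\right\vert <1$ and $\left\vert x\right\vert \leq 1$. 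Then I would multiply both sides by $h_{n}\left( x|q\right) $ and integrate over $[-1,1]$.

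The key step is then to interchange the integral and the sum on the right-hand side. This is justified because, as already noted after the display for $f_{bh}$, the expansion is an almost uniformly convergent Fourier series in $\left\{ h_{m}\right\} $, the convergence being controlled via the Carlitz identities (\ref{Car_id}) together with the bounds $\sup_{|x|\le 1}|h_{m}(x|q)|\le w_{m}(1|q)$ and the growth estimate (\ref{ogr_fi}) for $\varphi_h$; equivalently one may appeal to the Rademacher--Menshov theorem as indicated in the text. Once the interchange is performed, the orthogonality relation
\begin{equation*}
\int_{-1}^{1}h_{n}\left( x|q\right) h_{m}\left( x|q\right) f_{h}\left(
x|q\right) dx=\left( q\right) _{n}\delta _{nm}
\end{equation*}
collapses the sum: every term with $m\neq n$ vanishes, and the single surviving term $m=n$ contributes $\dfrac{\sigma_{n}^{\left(3\right)}\left(a,b,c|q\right)}{\left(q\right)_{n}}\cdot \left(q\right)_{n}=\sigma_{n}^{\left(3\right)}\left(a,b,c|q\right)$, which is precisely the claimed value of the integral.

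I do not expect any genuine obstacle here; the statement is a routine corollary of the preceding lemma, entirely parallel to the passage from Lemma \ref{expfQ} to formula (\ref{intH}). The only point that requires a word of care is the justification of term-by-term integration, and that has essentially already been dealt with in the surrounding discussion, so it suffices to point to it rather than redo the estimates. One could additionally remark, as a consistency check, that specializing $c=0$ gives $\sigma_{n}^{\left(3\right)}(a,b,0|q)=S_{n}^{\left(3\right)}(a,b,0|q)=S_{n}^{\left(2\right)}(a,b|q)$ and recovers (\ref{intH}), and that setting $b=c=0$ recovers $\int_{-1}^{1}h_{n}(x|q)f_{bh}(x|a,q)\,dx=a^{n}$; but these checks are optional and not needed for the proof.
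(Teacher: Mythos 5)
Your proposal is correct and is exactly the argument the paper intends: the paper's proof is simply ``Elementary,'' meaning precisely the multiplication of the expansion in Lemma \ref{C2H} by $h_{n}$ followed by term-by-term integration and orthogonality, in direct parallel with the derivation of (\ref{intH}) from Lemma \ref{expfQ}. Your additional care about justifying the interchange of sum and integral, and the consistency checks at $c=0$ and $b=c=0$, go slightly beyond what the paper records but change nothing in substance.
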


\begin{proof}
Elementary.
\end{proof}

Fourth family of polynomials that constitute AW scheme are the celebrated
Askey--Wilson polynomials. Again their form and relationship to other
families of polynomials of AW scheme is not important for our
considerations. Recently a relatively rich study of these relationships was
done in \cite{Szab-bAW} hence it may serve as the reference. We need only
the form of AW density. It is given e.g. in \cite{KLS}, (14.1.2) and after
necessary adaptation to our notation is presented below:%
\begin{gather*}
f_{AW}\left( x|a,b,c,d,q\right) =\frac{\left( ab,ac,ad,bc,bd,cd\right)
_{\infty }}{\left( abcd\right) _{\infty }} \\
\times f_{h}\left( x|a\right) \varphi _{h}\left( x|a,q\right) \varphi
_{h}\left( x|b,q\right) \varphi _{h}\left( x|c,q\right) \varphi _{h}\left(
x|d,q\right) ,
\end{gather*}
for $\left\vert x\right\vert \leq 1,$ $\left\vert a\right\vert ,\left\vert
b\right\vert ,\left\vert c\right\vert ,\left\vert d\right\vert <1.$ Our main
result concerns this density and is the following:

\begin{theorem}
\label{fAW}For $\left\vert x\right\vert \leq 1,$ $\left\vert a\right\vert
,\left\vert b\right\vert ,\left\vert c\right\vert ,\left\vert d\right\vert
<1 $%
\begin{equation}
f_{AW}\left( x|a,b,c,d,q\right) =f_{h}\left( x|q\right) \sum_{n\geq 0}\frac{%
\sigma _{n}^{\left( 4\right) }\left( a,b,c,d|q\right) }{\left( q\right) _{n}}%
h_{n}\left( x|q\right) ,  \label{fAWexp}
\end{equation}%
where 
\begin{equation}
\sigma _{n}^{\left( 4\right) }\left( a,b,c,d|q\right) =\sum_{j=0}^{n}\QATOPD[
] {n}{j}_{q}\frac{\left( bd\right) _{j}}{\left( abcd\right) _{j}}%
S_{n-j}^{\left( 2\right) }\left( b,d|q\right) \sum_{k=0}^{j}\QATOPD[ ] {j}{k}%
_{q}\left( cb\right) _{k}a^{k}\left( ad\right) _{j-k}c^{j-k},  \label{sig4}
\end{equation}%
are symmetric functions of $a,$ $b,$ $c,$ $d.$
\end{theorem}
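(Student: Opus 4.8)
The plan is to follow the same pattern established in the proofs of Lemma \ref{expfQ} and Lemma \ref{C2H}, now with four factors $\varphi_h$ instead of two or three. First I would write out $f_{AW}$ using the product formula for the density together with the expansion (\ref{chH}) applied to each of the four factors $\varphi_h(x|a,q),\ldots,\varphi_h(x|d,q)$, obtaining
\begin{equation*}
f_{AW}\left( x|a,b,c,d,q\right) =\frac{\left( ab,ac,ad,bc,bd,cd\right) _{\infty }}{\left( abcd\right) _{\infty }}f_{h}\left( x|q\right) \sum_{i,j,k,l\geq 0}\frac{a^{i}b^{j}c^{k}d^{l}}{\left( q\right) _{i}\left( q\right) _{j}\left( q\right) _{k}\left( q\right) _{l}}h_{i}h_{j}h_{k}h_{l}.
\end{equation*}
The idea is then to collapse the fourfold product $h_i h_j h_k h_l$ onto a single $h_s$ by repeated application of the linearization formula (\ref{linH}); equivalently, one can pair the variables as $(a,b)$ with $(c,d)$ or — anticipating the asymmetric-looking answer (\ref{sig4}) with its special role for $b,d$ versus $a,c$ — pair $(a,c)$ against $(b,d)$. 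Concretely, I would first use Lemma \ref{expfQ} twice: the product $\varphi_h(x|b,q)\varphi_h(x|d,q)$ contributes $(bd)_\infty^{-1}\sum_m S_m^{(2)}(b,d|q) h_m/(q)_m$, and likewise $\varphi_h(x|a,q)\varphi_h(x|c,q)$ gives $(ac)_\infty^{-1}\sum_r S_r^{(2)}(a,c|q)h_r/(q)_r$, so that $f_{AW}$ is a constant times $f_h$ times a double sum $\sum_{m,r} \frac{S_m^{(2)}(b,d)S_r^{(2)}(a,c)}{(q)_m (q)_r} h_m h_r$, which is exactly the Al-Salam--Chihara-type structure but with the $S^{(2)}$ coefficients replacing pure powers.

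At that point the remaining work is to expand $h_m h_r$ by (\ref{linH}), reindex, and recognize the coefficient of $h_s/(q)_s f_h$ as $\sigma_s^{(4)}(a,b,c,d|q)$. The key combinatorial identity I will need is a Cauchy-type product formula for the $S^{(2)}$'s — something like a convolution that produces the inner sum $\sum_{k=0}^{j}\QATOPD[ ] {j}{k}_{q}\left( cb\right) _{k}a^{k}\left( ad\right) _{j-k}c^{j-k}$ appearing in (\ref{sig4}). I expect this inner sum to emerge from carefully tracking how the ``$(q)_j$ in the denominator'' pieces of the two $S^{(2)}$ expansions recombine after the linearization, using (\ref{qBin}) for the $q$-binomials and the $q$-Vandermonde / $q$-binomial theorem to resum geometric-type series of the form $\sum (at)^n/(q)_n = 1/(at)_\infty$ against the prefactors $(ab,ac,ad,bc,bd,cd)_\infty/(abcd)_\infty$. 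The normalization constant cancellation — checking that the surviving $(\,\cdot\,)_j/(\,\cdot\,)_j$ ratios like $(bd)_j/(abcd)_j$ in (\ref{sig4}) are precisely what is left after the infinite Pochhammer symbols in front are partially absorbed — is the part that must be done with care.

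The main obstacle I anticipate is purely bookkeeping: correctly pairing the variables (the asymmetry of the displayed formula (\ref{sig4}) strongly suggests that one pairing, $(a,c)$ vs.\ $(b,d)$, is the natural one for the proof even though the result is symmetric), and then matching the triple-sum that results from (\ref{linH}) applied to the $S^{(2)}\!\cdot\! S^{(2)}$ product against the nested sum in (\ref{sig4}). A secondary point, which I would handle exactly as the paper does for $f_{bh}$, is to justify that all the interchanges of summation are legitimate: by (\ref{Car_id}) and the bound $\sup_{|x|\leq 1}|h_n(x|q)|\leq w_n(1|q)$ together with (\ref{ogr_fi}), each of the four $\varphi_h$ series converges absolutely and uniformly for $|x|\leq 1$ when $|a|,|b|,|c|,|d|<1$, so Fubini applies throughout and the final series is the $L^2(f_h)$ Fourier--$q$-Hermite expansion, giving almost-uniform convergence via Rademacher--Menshov as remarked earlier. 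Finally, symmetry of $\sigma_n^{(4)}$ in $a,b,c,d$ is automatic because $f_{AW}$ is manifestly symmetric and $\{h_n\}$ is an orthogonal basis, so the coefficients are uniquely determined; alternatively one can note $\int_{-1}^1 h_n(x|q) f_{AW}(x|a,b,c,d,q)\,dx = \sigma_n^{(4)}(a,b,c,d|q)$ and invoke symmetry of the integrand.
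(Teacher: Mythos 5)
Your architecture is exactly the paper's: expand all four $\varphi _{h}$ factors, collapse them pairwise into two Al-Salam--Chihara-type series with coefficients $S_{m}^{\left( 2\right) }$, linearize the remaining product $h_{m}h_{r}$ via (\ref{linH}), and read off the coefficient of $h_{n}$. (The paper pairs $(a,b)$ with $(c,d)$ rather than $(b,d)$ with $(a,c)$; by symmetry this is immaterial.) Your convergence/Fubini justification and the symmetry argument for $\sigma _{n}^{\left( 4\right) }$ are both fine and essentially what the paper does.

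The gap is in the step you dismiss as bookkeeping. After linearization and reindexing, writing $S_{m}^{\left( 2\right) }\left( a,b|q\right) =a^{m}w_{m}\left( b/a|q\right) $, the coefficient of $h_{n}\left( x|q\right) /\left( q\right) _{n}$ becomes (up to the constant $1/\left( ab,cd\right) _{\infty }$)
\begin{equation*}
\sum_{k=0}^{n}\frac{\left( q\right) _{n}}{\left( q\right) _{k}\left( q\right) _{n-k}}a^{k}c^{n-k}\sum_{j\geq 0}\frac{(ac)^{j}}{\left( q\right) _{j}}w_{k+j}\left( b/a|q\right) w_{j+n-k}\left( d/c|q\right) ,
\end{equation*}
and the inner $j$-sum is precisely the bilinear series $\lambda _{k,n-k}\left( b/a,d/c|ac,q\right) $. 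This is not a geometric-type series; it is not evaluated by $\sum_{j}t^{j}/\left( q\right) _{j}=1/\left( t\right) _{\infty }$ nor by a single $q$-Vandermonde invoked in the form you describe. The identity actually required is Carlitz's bilinear generating-function formula (\ref{Car21}) together with (\ref{Car22}), which the paper states in the preliminaries precisely for this purpose. That formula is the sole source of the finite Pochhammer ratio $\left( cb\right) _{s}\left( ad\right) _{t}\left( bd\right) _{s+t}/\left( abcd\right) _{s+t}$ --- you will not obtain it by ``partially absorbing'' the infinite products in the normalizing constant, since it comes from $\lambda _{m,n}/\lambda _{0,0}$ --- while $\lambda _{0,0}$ supplies the factor $\left( abcd\right) _{\infty }/\left( ac,ad,bc,bd\right) _{\infty }$ that cancels the prefactor of $f_{AW}$. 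Once (\ref{Car21})--(\ref{Car22}) are in hand, the remaining reindexation ($j=s+t$, $k=s$) matching the double sum to (\ref{sig4}) is indeed routine, as the paper shows; without naming that identity, your plan is missing its one genuinely nontrivial input.
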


\begin{proof}
is shifted to section \ref{dowody}.
\end{proof}

As immediate corollaries we have the following fact.

\begin{corollary}
\label{AW_int}For $\max (\left\vert a\right\vert ,\left\vert b\right\vert
,\left\vert c\right\vert ,\left\vert d\right\vert )<1:$%
\begin{equation}
\int_{-1}^{1}h_{n}\left( x\right) f_{AW}\left( x|a,b,c,d,q\right) dx=\sigma
_{n}^{\left( 4\right) }\left( a,b,c,d|q\right) .  \label{AWmom}
\end{equation}
\end{corollary}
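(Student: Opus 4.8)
The plan is to read the result off directly from the expansion furnished by Theorem \ref{fAW}. Fix a nonnegative integer $m$, multiply both sides of (\ref{fAWexp}) by $h_{m}(x|q)$, and integrate over $[-1,1]$ against Lebesgue measure. The left-hand side becomes $\int_{-1}^{1}h_{m}(x|q)f_{AW}(x|a,b,c,d,q)\,dx$, which is exactly the quantity appearing in (\ref{AWmom}). On the right-hand side, assuming for the moment that the integration may be carried out term by term, one obtains $\sum_{n\geq 0}\frac{\sigma_{n}^{(4)}(a,b,c,d|q)}{(q)_{n}}\int_{-1}^{1}h_{m}(x|q)h_{n}(x|q)f_{h}(x|q)\,dx$, and the orthogonality relation $\int_{-1}^{1}h_{n}(x|q)h_{m}(x|q)f_{h}(x|q)\,dx=(q)_{n}\delta_{nm}$ collapses this to the single surviving term $\sigma_{m}^{(4)}(a,b,c,d|q)$, which is (\ref{AWmom}).

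The only step that needs a word of justification is the interchange of summation and integration, and this is precisely the ``almost uniform'' convergence already invoked in the text: expansions of this type are Fourier series to which the Rademacher--Menshov theorem applies in view of (\ref{Car_id}). Concretely, $f_{AW}(x|a,b,c,d,q)/f_{h}(x|q)$ equals a constant times $\varphi_{h}(x|a,q)\varphi_{h}(x|b,q)\varphi_{h}(x|c,q)\varphi_{h}(x|d,q)$, each factor being bounded on $[-1,1]$ by (\ref{ogr_fi}); hence this ratio is a bounded function and lies in $L^{2}\big([-1,1],f_{h}(x|q)\,dx\big)$. Since the normalized $q$-Hermite polynomials $h_{n}(\cdot|q)/\sqrt{(q)_{n}}$ form a complete orthonormal system of that space (its orthogonality measure being compactly supported), (\ref{fAWexp}) is the Fourier expansion of that ratio and converges to it in $L^{2}(f_{h}\,dx)$; as $h_{m}(\cdot|q)$ is bounded on $[-1,1]$ (indeed $\sup_{|x|\leq 1}|h_{m}(x|q)|\leq w_{m}(1|q)$) it belongs to the same $L^{2}$ space, and continuity of the inner product legitimizes the term-by-term computation above. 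Alternatively one may bound things by hand: from $\sup_{|x|\leq 1}|h_{n}(x|q)|\leq w_{n}(1|q)$, the first identity in (\ref{Car_id}), and the explicit form (\ref{sig4}), one would show $\sum_{n\geq 0}\frac{|\sigma_{n}^{(4)}(a,b,c,d|q)|}{(q)_{n}}w_{n}(1|q)<\infty$ whenever $\max(|a|,|b|,|c|,|d|)<1$, giving uniform convergence of the series in (\ref{fAWexp}) on $[-1,1]$ and hence legitimizing term-by-term integration immediately.

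In short, the corollary is genuinely immediate once Theorem \ref{fAW} is in hand: all the substantive work lives in that theorem, and the only (minor) obstacle at this stage is the convergence bookkeeping just outlined.
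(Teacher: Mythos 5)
Your proposal is correct and takes essentially the same route as the paper: the paper's proof is simply ``Follows directly from (\ref{fAWexp})'', i.e.\ exactly the multiply-by-$h_m$, integrate, and invoke orthogonality argument you spell out. Your additional care about interchanging sum and integral (via $L^2$ convergence or the uniform bound from (\ref{Car_id}) and (\ref{ogr_fi})) is a sound filling-in of what the paper leaves implicit.
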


\begin{proof}
Follows directly from (\ref{fAWexp}).
\end{proof}

\begin{remark}
Notice that from (\ref{fAWexp}) follows in fact the value of AW integral,
since we see that $\int_{-1}^{1}f_{AW}\left( x|a,b,c,d|q\right) \allowbreak
=\allowbreak 1$ which means that the integral 
\begin{gather}
\frac{1}{2\pi }\int_{-1}^{1}\frac{1}{\sqrt{1-x^{2}}}\prod_{n\geq 0}\frac{%
l\left( x|q^{n}\right) }{v\left( x|aq^{n}\right) v\left( x|bq^{n}\right)
v\left( x|cq^{n}\right) v\left( x|dq^{n}\right) }dx  \label{AWint} \\
=\frac{\left( abcd\right) _{\infty }}{\left( q,ab,ac,ad,bc,bd,cd\right)
_{\infty }}.  \notag
\end{gather}%
(\ref{AWint}) is nothing else but the celebrated AW integral. Notice also
that recently there appeared at least two papers \cite{Liu2009}, \cite%
{Ma2011} where (\ref{AWint}) was derived from much more advanced theorems.
\end{remark}

\begin{remark}
Notice also that (\ref{AWmom}) allows calculation of all moments of AW
density. This is so since one knows the form of polynomials $h_{n}.$ Moments
of AW density were calculated by Corteel et. al. in 2010 in \cite{Corteel10}
using combinatorial means. For complex $a,$ $b,$ $c,$ $d$ but forming
conjugate pairs this formula was also obtained independently about the same
time. Namely it was done in \cite{SzablAW} where also an elegant expansion
of $\sigma _{n}^{\left( 4\right) }\left( \rho _{1}e^{i\eta },\rho
_{1}e^{-i\eta },\rho _{2}e^{i\theta },\rho _{2}e^{-i\theta }|q\right) $ in
terms of $h_{n}\left( y|q\right) $ and $h_{n}\left( z|q\right) ,$ where $%
\cos \eta \allowbreak =\allowbreak y$ and $\cos \theta \allowbreak
=\allowbreak z\ $was presented.
\end{remark}

\section{Generalization and open questions}

\label{gen}

\subsection{Generalization}

The presented above results allow the following generalization. The cases $%
\left\vert q\right\vert <1$ and $q\allowbreak =\allowbreak 1$ will be
treated separately. First let us consider $\left\vert q\right\vert <1$.

Let us denote $\mathbf{a}^{\left( k\right) }\allowbreak =\allowbreak \left(
a_{1},\ldots ,a_{k}\right) ,$ $k\allowbreak =\allowbreak 0,1,\ldots $ . We
will assume that $\left\vert x\right\vert \leq 1$ and that all parameters $%
a_{i}$ have absolute values less that $1.$ Let us denote 
\begin{equation*}
g_{n}\left( x|\mathbf{a}^{\left( n\right) },q\right) \allowbreak
=\allowbreak f_{h}\left( x|q\right) \prod_{j=1}^{n}\varphi _{h}\left(
x|a_{i},q\right) ,
\end{equation*}%
where functions $f_{h}$ and $\varphi _{h}$ were defined by (\ref{f_h}) and (%
\ref{chH}) respectively.

We remark following (\ref{ogr_fi}) and (\ref{ogrfh}) that 
\begin{equation}
g_{n}\left( x|\mathbf{a}^{\left( n\right) },q\right) \leq \frac{2(q)_{\infty
}\left( -q\right) _{\infty }^{2}}{\pi }\prod_{j=1}^{n}\frac{1}{\left(
\left\vert a_{j}\right\vert \right) _{\infty }^{2}},  \label{ogr_g}
\end{equation}
for $\left\vert x\right\vert \leq 1,$ and $\left\vert a_{i}\right\vert <1$
for $j\allowbreak =\allowbreak 1,\ldots ,n.$

We have the following general result.

\begin{lemma}
For every $n\geq 0$ $,$ there exist $A_{n}\left( \mathbf{a}^{\left( n\right)
},q\right) $ a symmetric function of $\mathbf{a}^{\left( n\right) }$ and a
sequence of symmetric in $\mathbf{a}^{\left( n\right) }$ functions $\left\{
T_{j}^{\left( n\right) }\left( \mathbf{a}^{\left( n\right) },q\right)
\right\} _{j\geq 0}$ such that for $\left\vert a_{k}\right\vert <1,$ $%
k\allowbreak =\allowbreak 1,\ldots ,n:$ 
\begin{equation}
g_{n}\left( x|\mathbf{a}^{\left( n\right) },q\right) =A_{n}\left( \mathbf{a}%
^{\left( n\right) },q\right) f_{h}\left( x|q\right) \sum_{j\geq 0}\frac{%
T_{j}^{\left( n\right) }\left( \mathbf{a}^{\left( n\right) },q\right) }{%
\left( q\right) _{j}}h_{j}\left( x|q\right) .  \label{symroz}
\end{equation}%
Moreover 
\begin{equation}
\sum_{j\geq 0}\left( T_{j}^{\left( n\right) }\left( \mathbf{a}^{\left(
n\right) },q\right) \right) ^{2}<\infty .  \label{parseval}
\end{equation}
\end{lemma}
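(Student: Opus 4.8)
The plan is to expand each factor $\varphi_h\left(x|a_i,q\right)$ as its defining $q$-Hermite generating series \eqref{chH}, multiply the $n$ series together, and then use the linearization formula \eqref{linH} repeatedly to collapse the resulting product of $h_j$'s into a single linear combination of $h_j$'s with $f_h$ in front. Concretely, $g_n\left(x|\mathbf{a}^{(n)},q\right) = f_h\left(x|q\right)\sum_{j_1,\dots,j_n\ge 0}\frac{a_1^{j_1}\cdots a_n^{j_n}}{(q)_{j_1}\cdots(q)_{j_n}}h_{j_1}(x|q)\cdots h_{j_n}(x|q)$. Applying \eqref{linH} $n-1$ times expresses the product $h_{j_1}\cdots h_{j_n}$ as $\sum_s c_s h_s(x|q)$ with $c_s$ an explicit polynomial in $q$ and the multi-indices; collecting the coefficient of $h_s(x|q)/(q)_s$ and summing over all $j_1,\dots,j_n$ gives a symmetric function which, after pulling out the overall normalizing constant $A_n\left(\mathbf{a}^{(n)},q\right) = \int_{-1}^1 g_n$, is the desired $T_s^{(n)}$. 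Symmetry in $\mathbf{a}^{(n)}$ is automatic since $g_n$ itself is manifestly symmetric and $\{h_j\}$ is a basis, so the coefficients are uniquely determined and hence inherit the symmetry. For the existence of $A_n$ I would note that $g_n$ is a product of nonnegative integrable functions, that $\varphi_h\left(x|a_i,q\right)\le 1/(|a_i|)_\infty^2$ by \eqref{ogr_fi} is bounded on $[-1,1]$, and $f_h$ is a probability density, so $0<A_n<\infty$.

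First I would establish that the double (indeed $(n+1)$-fold) series converges absolutely and almost uniformly on $|x|\le 1$, so that term-by-term manipulation and reordering is legitimate: using $\sup_{|x|\le 1}|h_j(x|q)|\le w_j(1|q)$ together with the first Carlitz identity in \eqref{Car_id}, the series $\sum_j w_j(1|q)|a_i|^j/(q)_j = 1/(|a_i|)_\infty^2$ converges for each $i$, so the product series is dominated by a convergent product and the Rademacher--Menshov machinery (already invoked in the excerpt for the $n\le 4$ cases) applies. Then I would carry out the linearization collapse and define $T_s^{(n)}$ and $A_n$ as above; one checks $A_n = \int_{-1}^1 g_n\,dx$ by integrating \eqref{symroz} against $f_h$-orthonormality (only the $j=0$ term survives since $h_0\equiv 1$ and $\int h_j f_h = 0$ for $j\ge1$), which forces $T_0^{(n)}=1$ after the normalization, consistent with the $n\le 4$ formulas.

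For the $\ell^2$-bound \eqref{parseval}: the expansion \eqref{symroz} is, up to the constant $A_n f_h$, the Fourier--$q$-Hermite expansion of the function $g_n/f_h = \prod_i\varphi_h\left(x|a_i,q\right)$. Since $\sup_{|x|\le1}\varphi_h\left(x|a_i,q\right)$ is finite, this function is bounded, hence in $L^2\left([-1,1],f_h\,dx\right)$, and Bessel's inequality for the orthogonal system $\{h_j\}$ (with $\int h_j^2 f_h = (q)_j$) gives $\sum_j \left(A_n T_j^{(n)}/(q)_j\right)^2 (q)_j = \sum_j A_n^2 \left(T_j^{(n)}\right)^2/(q)_j < \infty$. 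Since $(q)_j \to (q)_\infty > 0$ (for $|q|<1$), the weights $1/(q)_j$ are bounded below by a positive constant, so $\sum_j \left(T_j^{(n)}\right)^2 < \infty$ follows immediately. Alternatively, and more in the spirit of the Carlitz estimates already quoted, one can bound $|T_j^{(n)}|$ directly by $\left(\max_i|a_i|\right)^j$ times a $q$-combinatorial factor controlled by products of $w_j(1|q)$, and invoke \eqref{Car_id} again.

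The main obstacle I anticipate is purely bookkeeping: organizing the $(n-1)$-fold iterated application of the linearization formula \eqref{linH} so that the summation indices can be cleanly reindexed and the order of summation interchanged — this is exactly the ``tedious'' combinatorics that the author defers to Section~\ref{dowody}, and it is the reason no compact closed form for $T_j^{(n)}$ is claimed for general $n$. The convergence and $\ell^2$ parts are comparatively routine once the domination by Carlitz's series \eqref{Car_id} is in place; the genuinely delicate point is only that all the interchanges are justified, which the almost-uniform convergence on the compact interval $[-1,1]$ secures.
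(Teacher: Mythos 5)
Your proposal is correct, and the parts of it that actually carry the proof coincide with the paper's argument. The paper proves this lemma purely abstractly in $L^{2}\left( [-1,1],f_{h}\,dx\right) $: by \eqref{ogr_fi} the product $\prod_{i}\varphi _{h}\left( x|a_{i},q\right) $ is bounded on $[-1,1]$, hence lies in that space, which is spanned by $\left\{ h_{j}\right\} $; the $T_{j}^{\left( n\right) }$ are (after normalizing by $A_{n}=\int_{-1}^{1}g_{n}$) its Fourier coefficients, symmetry follows from uniqueness of those coefficients together with the manifest symmetry of the product, and \eqref{parseval} is Parseval's identity combined with the fact that $\left( q\right) _{j}$ is bounded above and below by positive constants for $\left\vert q\right\vert <1$ --- which is exactly the content of your second and third paragraphs. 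The genuinely different element in your write-up is the constructive front-end via an $(n-1)$-fold iteration of the linearization formula \eqref{linH}; that is not needed for the bare existence statement, it is where all the bookkeeping you worry about lives, and the paper instead produces the coefficients recursively in Proposition \ref{GEN}, adjoining one factor $\varphi _{h}\left( x|a_{n+1},q\right) $ at a time so that only a single application of \eqref{linH} is required per step. So your route would work (the domination by the Carlitz series \eqref{Car_id} does justify the rearrangements, since all linearization coefficients are positive for $\left\vert q\right\vert <1$), but the Hilbert-space argument you already give for the $\ell ^{2}$ bound suffices for the entire lemma, and the explicit collapse is better organized as the one-factor-at-a-time recursion.
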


\begin{proof}
Let $\mathcal{G\allowbreak =\allowbreak }L_{2}\left( <-1,1>,\mathcal{F}%
,f_{h}\right) $ be the space of functions $h:<-1,1>\allowbreak \longmapsto 
\mathbb{R}$ such that $\int_{-1}^{1}h^{2}\left( x\right) f_{h}\left(
x|q\right) dx.$ Notice that this space is spanned by the polynomials $%
\left\{ h_{j}\left( x|q\right) \right\} _{j\geq 0}.$ Visibly, under our
assumptions and by (\ref{ogr_fi}), $\prod_{j=1}^{n}\varphi _{h}\left(
x|a_{i},q\right) \allowbreak \in \allowbreak \mathcal{G}.$ Now notice that $%
\left\{ T_{j}^{\left( n\right) }\left( \mathbf{a}^{\left( n\right)
},q\right) \right\} _{j\geq 0}$ are coefficients of the Fourier expansion of
the function $\prod_{j=1}^{n}\varphi _{h}\left( x|a_{i},q\right) $ in $%
\mathcal{G}$ with respect to $\left\{ h_{j}\left( x|q\right) \right\}
_{j\geq 0}.$ Since 
\begin{equation*}
\int_{-1}^{1}f_{h}\left( x|q\right) \sum_{j\geq 0}\frac{T_{j}^{\left(
n\right) }\left( \mathbf{a}^{\left( n\right) },q\right) }{\left( q\right)
_{j}}h_{j}\left( x|q\right) dx\allowbreak =\allowbreak 1,
\end{equation*}%
$A_{n}\left( \mathbf{a}^{\left( n\right) },q\right) $ is the value of $%
\int_{-1}^{1}g_{n}\left( x|\mathbf{a}^{\left( n\right) },q\right) dx.$ (\ref%
{parseval}) follow properties of the Fourier expansion more precise the
Perseval's identity. The fact that $A_{n}$ and $\left\{ T_{j}^{\left(
n\right) }\right\} _{j\geq 0}$ are symmetric follows the observations that $%
\prod_{j=1}^{n}\varphi _{h}\left( x|a_{i},q\right) $ is symmetric.
\end{proof}

Using formula (\ref{mi}) we can write $g_{n}$ in the following way where $%
h_{j}$ are $q-$Hermite polynomials defined by (\ref{q-H}). Functions $%
A_{n}\left( \mathbf{a}^{\left( n\right) },q\right) $ and $\left\{
T_{j}^{\left( n\right) }\left( \mathbf{a}^{\left( n\right) },q\right)
\right\} _{j\geq 0}$ have the following interpretation:%
\begin{equation*}
\int_{\lbrack -1,1]}h_{j}\left( x|q\right) g_{n}\left( x|\mathbf{a}^{\left(
n\right) },q\right) dx=A_{n}\left( \mathbf{a}^{\left( n\right) },q\right)
T_{j}^{\left( n\right) }\left( \mathbf{a}^{\left( n\right) },q\right) ,
\end{equation*}%
for $n,j\geq 0.$

We have the following easy Proposition giving recursions that are satisfied
by functions $A_{n}$ and $T_{j}^{\left( n\right) }.$

\begin{proposition}
\label{GEN}Let us define new sequence of functions $\left\{ H_{s}\left( 
\mathbf{a}^{\left( n\right) },q\right) \right\} _{n,s\geq 0}$ of $n$
variables:%
\begin{equation*}
\sum_{m\geq 0}\frac{a_{n}^{m}}{\left( q\right) _{m}}T_{s+m}^{\left(
n-1\right) }\left( \mathbf{a}^{\left( n-1\right) },q\right) =H_{s}^{\left(
n\right) }\left( \mathbf{a}^{\left( n\right) },q\right) \sum_{m\geq 0}\frac{%
a_{n}^{m}}{\left( q\right) _{m}}T_{m}^{\left( n-1\right) }\left( \mathbf{a}%
^{\left( n-1\right) },q\right) .
\end{equation*}

Then i)%
\begin{equation*}
A_{n}\left( \mathbf{a}^{\left( n\right) },q\right) =A_{n-1}\left( \mathbf{a}%
^{\left( n-1\right) },q\right) \sum_{m\geq 0}\frac{a_{n}^{m}}{\left(
q\right) _{m}}T_{m}^{\left( n-1\right) }\left( \mathbf{a}^{\left( n-1\right)
},q\right) ,
\end{equation*}

ii) 
\begin{equation*}
T_{j}^{\left( n\right) }\left( \mathbf{a}^{\left( n\right) },q\right)
=\sum_{s=0}^{j}\QATOPD[ ] {j}{s}_{q}H_{s}^{\left( n-1\right) }\left( \mathbf{%
a}^{\left( n-1\right) },q\right) \left( a_{n}\right) ^{j-s}.
\end{equation*}
\end{proposition}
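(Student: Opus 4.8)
The plan is to iterate the basic recursion $g_n = g_{n-1}\cdot \varphi_h(x|a_n,q)$ and feed it through the $q$-Hermite expansion machinery already used for $n\le 4$. Starting from the defining expansion \eqref{symroz} for $g_{n-1}$, namely
\begin{equation*}
g_{n-1}\left( x|\mathbf{a}^{\left( n-1\right) },q\right) =A_{n-1}\left( \mathbf{a}^{\left( n-1\right) },q\right) f_{h}\left( x|q\right) \sum_{j\geq 0}\frac{T_{j}^{\left( n-1\right) }\left( \mathbf{a}^{\left( n-1\right) },q\right) }{\left( q\right) _{j}}h_{j}\left( x|q\right) ,
\end{equation*}
I would multiply both sides by $\varphi_h(x|a_n,q)=\sum_{m\ge0}\frac{a_n^m}{(q)_m}h_m(x|q)$ from \eqref{chH}, giving $g_n/f_h$ as a double sum $\sum_{j,m}\frac{T_j^{(n-1)}a_n^m}{(q)_j(q)_m}h_j(x|q)h_m(x|q)$ times $A_{n-1}$.

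The second step is to linearize the product $h_j h_m$ via \eqref{linH}, collect the coefficient of a fixed $h_s(x|q)$, and reorganize. This is exactly the computation carried out in the proof of Lemma~\ref{expfQ} (and in the proofs of Lemmas \ref{C2H} and Theorem \ref{fAW}), so the bookkeeping is routine: the linearization index $k$ pairs off $k$ of the $T$-indices with $k$ of the $a_n$-powers, and after reindexing one is left with an inner sum over $m\ge0$ of $\frac{a_n^m}{(q)_m}T^{(n-1)}_{s+m}$, which is precisely what the statement abbreviates by introducing $H_s^{(n)}$ through the quotient
\begin{equation*}
\sum_{m\geq 0}\frac{a_{n}^{m}}{\left( q\right) _{m}}T_{s+m}^{\left( n-1\right) }=H_{s}^{\left( n\right) }\cdot\sum_{m\geq 0}\frac{a_{n}^{m}}{\left( q\right) _{m}}T_{m}^{\left( n-1\right) }.
\end{equation*}
The factor $\sum_{m\ge0}\frac{a_n^m}{(q)_m}T_m^{(n-1)}$ that is common to every $s$ then collects out front; multiplied by $A_{n-1}$ it must equal $A_n$, since integrating \eqref{symroz} over $[-1,1]$ forces $A_n=\int_{-1}^1 g_n\,dx$ and the $h_j$ with $j\ge1$ integrate to zero against $f_h$. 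That yields (i), and matching the coefficient of $h_j(x|q)$ gives (ii), the $q$-binomial sum $T_j^{(n)}=\sum_{s=0}^j\QATOPD[ ]{j}{s}_q H_s^{(n-1)}a_n^{j-s}$ coming directly from the $k\mapsto s=j+m-2k$ reindexing together with \eqref{qBin}.

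The only genuine point requiring care — the "hard part" — is justifying the interchanges of summation in the double/triple series. Here I would invoke the bounds already assembled in the Preliminaries: $\sup_{|x|\le1}|h_n(x|q)|\le w_n(1|q)$, the estimate \eqref{ogr_fi} for $\varphi_h$, Proposition~\ref{ch_sym}(iii) together with the Carlitz identities \eqref{Car_id} to control $\sum w_k(1|q)t^k/(q)_k$ and $\sum w_k^2(1|q)t^k/(q)_k$, and the Parseval bound \eqref{parseval} on the $T_j^{(n-1)}$, so that the rearrangements are absolutely convergent for $|a_k|<1$. One also needs $\sum_{m\ge0}\frac{a_n^m}{(q)_m}T_m^{(n-1)}\neq0$ for the quotient defining $H_s^{(n)}$ to make sense; but this sum equals $A_n/A_{n-1}$ by (i), and $A_n=\int_{-1}^1 g_n\,dx>0$ since $g_n$ is a strictly positive integrable function on $(-1,1)$, so the definition is legitimate. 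With these analytic points dispatched, both (i) and (ii) drop out of matching coefficients in the (now justified) rearranged expansion, and symmetry in $\mathbf{a}^{(n)}$ is inherited from that of $g_n$ exactly as in the preceding Lemma.
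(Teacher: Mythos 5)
Your proposal follows essentially the same route as the paper's own proof: write $g_n = g_{n-1}\,\varphi_h(x|a_n,q)$, expand both factors in $q$-Hermite polynomials, linearize the products $h_jh_m$ via (\ref{linH}), and reindex so that the inner sum $\sum_{m\ge0}a_n^m T_{s+m}^{(n-1)}/(q)_m$ appears and is replaced by $H_s$ times the common factor that becomes $A_n/A_{n-1}$. The extra remarks on absolute convergence and on the nonvanishing of $\sum_{m\ge0}a_n^m T_m^{(n-1)}/(q)_m$ (which indeed equals $\int_{-1}^1 g_n\,dx/A_{n-1}>0$) are sound additions that the paper leaves implicit.
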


\begin{proof}
Proof is shifted to section \ref{dowody}.
\end{proof}

\begin{remark}
The integral $\int_{-1}^{1}g_{n}\left( x|\mathbf{a}^{\left( n\right)
},q\right) dx$ has been calculated in \cite{ISV87} (see also theorem 15.3.1
in \cite{IA}) by combinatorial methods. Obtained formula is however very
complicated. Besides above mentioned Theorem 15.3.1 of \cite{IA} does not
provide expansion (\ref{symroz}) which is automatically obtained in our
approach.
\end{remark}

\begin{remark}
Notice also that following Proposition \ref{GEN}, i) we get for $\left\vert
a_{j}\right\vert <1,$ $j\allowbreak =\allowbreak 1,\ldots ,5:$ 
\begin{equation}
\int_{-1}^{1}g_{5}\left( x|\mathbf{a}^{\left( 5\right) },q\right)
\allowbreak =\allowbreak \frac{\left( \prod_{j}^{4}a_{j}\right) _{\infty }}{%
(q)_{\infty }\prod_{1\leq k<m\leq 4}\left( a_{k}a_{m}\right) _{\infty }}%
\sum_{j\geq 0}\frac{a_{5}^{j}}{\left( q\right) _{j}}\sigma _{j}^{\left(
4\right) }\left( a_{1},a_{2},a_{3},a_{4}|q\right)  \label{_5}
\end{equation}
\end{remark}

For $q\allowbreak =\allowbreak 0$ the calculations presented in (\ref{_5})
can be carried out completely and the concise form can be obtained. This is
possible due to the following simplified form of (\ref{sig4}).

\begin{theorem}
\label{free}Let $\mathbf{a}^{\left( 5\right) }\allowbreak .$ Under $%
\left\vert a_{j}\right\vert <1,$ $j\allowbreak =\allowbreak 1,\ldots ,5$ we
have:

i) 
\begin{gather*}
\sigma _{n}^{\left( 4\right) }\allowbreak
(a_{1},a_{2},a_{3},a_{4}|0)=\allowbreak S_{n}^{\left( 2\right)
}(a_{2},a_{4}|0)\allowbreak +\allowbreak \frac{(1-a_{2}d)(1-a_{1}a_{4})}{%
(1-a_{1}a_{2}a_{3}a_{4})}a_{3}S_{n-1}^{(3)}(a_{2},a_{3},a_{4}|0)+ \\
\frac{(1-a_{2}a_{4})(1-a_{3}a_{2})}{(1-a_{1}a_{2}a_{3}a_{4})}%
a_{1}S_{n-1}^{(3)}(a_{1},a_{2},a_{4}|0)+ \\
\allowbreak \frac{(1-a_{2}a_{4})(1-a_{2}a_{3})(1-a_{1}a_{4})a_{1}a_{3}}{%
(1-a_{1}a_{2}a_{3}a_{4})}S_{n-2}^{(4)}(a_{1},a_{2},a_{3},a_{4}|0)\allowbreak
,
\end{gather*}%
$\allowbreak $

ii) 
\begin{equation*}
\int_{-1}^{1}g_{5}\left( x|\mathbf{a}^{\left( 5\right) },0\right)
\allowbreak dx=\allowbreak \frac{1-e_{4}\left( \mathbf{a}^{\left( 5\right)
}\right) +e_{5}\left( \mathbf{a}^{\left( 5\right) }\right) e_{1}\left( 
\mathbf{a}^{\left( 5\right) }\right) -e_{5}^{2}\left( \mathbf{a}^{\left(
5\right) }\right) }{\prod_{1\leq j<k\leq 5}(1-a_{j}a_{k})},
\end{equation*}%
where $e_{1},\ldots ,e_{5}$ denote respectively first five elementary
symmetric functions of the vector $\mathbf{a}^{\left( 5\right) }.$ That is $%
\chi _{j}\left( \mathbf{a}^{\left( k\right) }\right) \allowbreak
=\allowbreak \sum_{1\leq n_{1}<n_{2}\ldots <n_{j}\leq
k}\prod_{m=1}^{j}a_{n_{m}}.$
\end{theorem}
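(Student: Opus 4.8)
The plan is to specialize the general formula (\ref{sig4}) to $q=0$, simplify the resulting sums using Remark \ref{particular} (which tells us $(a;0)_n = 1-a$ for $n\geq 1$, $(a;0)_0=1$, and $\QATOPD[]{n}{k}_0 = 1$ for $0\leq k\leq n$), and then feed the simplified $\sigma_n^{(4)}(\cdot|0)$ into the expansion (\ref{_5}) for $\int_{-1}^1 g_5(x|\mathbf{a}^{(5)},0)\,dx$.

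\textbf{Step 1: derive part i).} In (\ref{sig4}) with $q=0$, the inner sum $\sum_{k=0}^j \QATOPD[]{j}{k}_0 (cb;0)_k a^k (ad;0)_{j-k} c^{j-k}$ collapses because each $q$-binomial is $1$ and each Pochhammer symbol of argument $\neq 0$ index is $1-(\cdot)$. One must carefully track the boundary terms $k=0$ and $k=j$ (where one Pochhammer symbol has index $0$ and equals $1$), so the inner sum splits as $(ad)^{?}$-type corner contributions plus a bulk term $(1-cb)(1-ad)\sum_{k=1}^{j-1}a^kc^{j-k}$; the geometric sum $\sum_{k=1}^{j-1}a^kc^{j-k}$ I would rewrite in terms of $S^{(2)}_{j}(a,c|0)=\sum_{k=0}^{j}a^kc^{j-k}$ minus its two endpoints. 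Similarly the outer factor $(bd;0)_j/(abcd;0)_j$ equals $1$ for $j=0$ and $(1-bd)/(1-abcd)$ for $j\geq 1$. Combining the two layers and collecting by the power of $j$ that survives (i.e., sorting into the terms with $S_n^{(2)}$, $S_{n-1}^{(3)}$, $S_{n-2}^{(4)}$ at $q=0$), one lands on the four-term expression in i) after renaming $(a,b,c,d)\mapsto(a_1,a_2,a_3,a_4)$. The claimed symmetry of $\sigma_n^{(4)}$ in its four arguments (asserted in Theorem \ref{fAW}) is a useful consistency check here.

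\textbf{Step 2: derive part ii).} Substitute the formula from i) into (\ref{_5}). At $q=0$ the prefactor $\frac{(\prod_{j=1}^4 a_j)_\infty}{(q)_\infty \prod_{1\leq k<m\leq 4}(a_ka_m)_\infty}$ simplifies using $(a;0)_\infty = 1-a$ and $(0;0)_\infty = 1$, giving $\frac{1-a_1a_2a_3a_4}{\prod_{1\leq k<m\leq 4}(1-a_ka_m)}$. The remaining work is to evaluate $\sum_{j\geq 0}a_5^j \sigma_j^{(4)}(a_1,a_2,a_3,a_4|0)$ (note $(q)_j = 1$ at $q=0$ for all $j$, or rather $(0;0)_j=1$; one should double check this normalization, but the generating-function identity (\ref{chS}) at $q=0$ reads $\sum_n t^n S_n^{(k)}(\mathbf{a}|0) = \prod_j 1/(1-a_j t)$, which is the tool I would use). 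Each of the four pieces of i) contributes a rational function of $a_1,\ldots,a_5$ via such geometric-type summations; adding them over a common denominator $\prod_{1\leq j<k\leq 5}(1-a_ja_k)$ and multiplying by the simplified prefactor should produce the numerator $1-\chi_4+\chi_5\chi_1-\chi_5^2$ in the elementary symmetric functions of $\mathbf{a}^{(5)}$.

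\textbf{Main obstacle.} The genuinely hard part is Step 2's final algebraic collapse: after summing the four pieces one obtains a sum of four rational functions with different (though overlapping) denominators, and showing that the combined numerator equals exactly $1-\chi_4(\mathbf{a}^{(5)})+\chi_5(\mathbf{a}^{(5)})\chi_1(\mathbf{a}^{(5)})-\chi_5^2(\mathbf{a}^{(5)})$ requires a careful symmetric-function manipulation — the symmetry in $\mathbf{a}^{(5)}$ is not manifest from (\ref{_5}) (which singles out $a_5$) nor from i) (which singles out $a_2$), so verifying it is the crux. I would either clear denominators and verify the polynomial identity degree by degree in a chosen variable (say $a_5$), using symmetry in $a_1,\ldots,a_4$ to reduce the number of coefficients to check, or invoke the symmetry already guaranteed by Theorem \ref{fAW} together with low-degree coefficient matching to pin down the numerator uniquely. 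Tracking the boundary terms correctly in Step 1 is a lesser but real source of error, so I would verify i) against the known $q=0$ specializations of Lemma \ref{expfQ} and Lemma \ref{C2H} (the $S^{(2)}$ and $\sigma^{(3)}$ cases) before proceeding.
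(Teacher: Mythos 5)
Your plan follows essentially the same route as the paper's own proof: specialize (\ref{sig4}) at $q=0$ via Remark \ref{particular}, split the inner sum into the $k=0$, $k=j$ corner terms plus a bulk geometric sum recognized as $ac\,S^{(2)}_{j-2}(a,c|0)$, recombine via the $q=0$ convolution property (\ref{rozklS}) into the $S^{(2)},S^{(3)},S^{(4)}$ pieces of part i), and then feed the result into (\ref{_5}) using (\ref{chS}) and $(0)_n=1$ to obtain part ii). The final symmetric-function collapse you flag as the crux is likewise left implicit in the paper, so your proposal is faithful to, and no less complete than, the published argument.
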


\begin{proof}
Is shifted to Section \ref{dowody}.
\end{proof}

For $q\allowbreak =\allowbreak 1$ the problem of finding sequences $%
A_{n}\left( \mathbf{a}^{\left( n\right) }|1\right) $ and $\left\{
T_{j}^{\left( n\right) }\left( \mathbf{a}^{\left( n\right) },1\right)
\right\} _{j\geq 0}$ can be solved completely and trivially. Namely we have:

\begin{proposition}
\label{q=1}%
\begin{eqnarray*}
A_{n}\left( \mathbf{a}^{\left( n\right) }|1\right) &=&\exp \left(
\sum_{1\leq j<k\leq n}a_{j}a_{k}\right) , \\
T_{j}^{\left( n\right) }\left( \mathbf{a}^{\left( n\right) },1\right)
&=&\left( \sum_{k=1}^{n}a_{k}\right) ^{j}.
\end{eqnarray*}
\end{proposition}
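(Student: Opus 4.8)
The plan is to use the $q=1$ specialization of the general expansion
(\ref{symroz}) together with Remark \ref{special} and the Gaussian
generating function from Proposition \ref{ch_sym}. First I would record
that, after the rescaling and normalization indicated in Remark
\ref{special1} and Remark \ref{special}, the case $q=1$ amounts to
expanding the product $\prod_{j=1}^{n}\exp(a_{j}x-a_{j}^{2}/2)$ against the
Gaussian weight $\exp(-x^{2}/2)/\sqrt{2\pi}$ in the probabilist Hermite
polynomials $H_{j}$. Pulling the constants out, this product equals
$\exp\!\bigl(-\tfrac12\sum_{j}a_{j}^{2}\bigr)\exp\!\bigl(x\sum_{j}a_{j}\bigr)$,
so everything reduces to the single generating identity
$\exp(tx-t^{2}/2)=\sum_{j\ge 0}\frac{t^{j}}{j!}H_{j}(x)$ with
$t=\sum_{k=1}^{n}a_{k}$.

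Next I would match this against (\ref{symroz}) in the $q=1$ limit. The
left-hand side, written as $A_{n}\exp(-x^{2}/2)/\sqrt{2\pi}\sum_{j}
\frac{T_{j}^{(n)}}{j!}H_{j}(x)$, must equal
$\exp\!\bigl(-\tfrac12\sum_{j}a_{j}^{2}+\tfrac12(\sum_{k}a_{k})^{2}\bigr)
\exp(-x^{2}/2)/\sqrt{2\pi}\sum_{j}\frac{(\sum_{k}a_{k})^{j}}{j!}H_{j}(x)$,
using the generating identity above with $t=\sum_k a_k$. Since the
$H_{j}$ are linearly independent and the weight is strictly positive, I can
cancel $\exp(-x^{2}/2)/\sqrt{2\pi}$ and read off coefficients term by term.
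The exponent simplifies via
$\tfrac12(\sum_{k}a_{k})^{2}-\tfrac12\sum_{k}a_{k}^{2}
=\sum_{1\le j<k\le n}a_{j}a_{k}$,
which gives $A_{n}(\mathbf{a}^{(n)}|1)=\exp\!\bigl(\sum_{1\le j<k\le n}a_{j}a_{k}\bigr)$,
and comparison of the $H_{j}$-coefficients gives
$T_{j}^{(n)}(\mathbf{a}^{(n)},1)=(\sum_{k=1}^{n}a_{k})^{j}$.
Alternatively, one can verify these directly from the recursions in
Proposition \ref{GEN} at $q=1$: since $T_{m}^{(n-1)}=t_{n-1}^{m}$ with
$t_{n-1}=\sum_{k=1}^{n-1}a_{k}$, the defining relation for $H_{s}^{(n)}$
reduces to $\exp(a_{n}t_{n-1})t_{n-1}^{s}=H_{s}^{(n)}\exp(a_{n}t_{n-1})$,
whence $H_{s}^{(n-1)}=t_{n-1}^{s}$, and part ii) of Proposition \ref{GEN}
collapses by the binomial theorem to $(t_{n-1}+a_{n})^{j}=t_{n}^{j}$,
while part i) multiplies $A_{n-1}$ by $\exp(a_{n}t_{n-1})$, reproducing the
sum $\sum_{1\le j<k\le n}a_{j}a_{k}$ by induction.

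I do not expect a serious obstacle here; the only point requiring a little
care is the passage to the $q=1$ limit, i.e.\ justifying that the rescaled
objects $A_{n}$ and $T_{j}^{(n)}$ genuinely converge to their $q=1$
counterparts and that the expansion (\ref{symroz}) survives the limit.
This is handled by the uniform bounds already available: the $q=1$ versions
of Carlitz's estimates (\ref{Car_id}) together with the rescaling of Remark
\ref{special1} give a dominating series, so term-by-term passage to the
limit is legitimate. Given that, the computation is the short, self-contained
argument sketched above, so I would simply present the direct verification via
Proposition \ref{GEN} as the cleanest route.
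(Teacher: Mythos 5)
Your proposal is correct and follows essentially the same route as the paper: write $g_{n}(x|\mathbf{a}^{(n)},1)$ as $\exp\left(-x^{2}/2+x\sum_{j}a_{j}-\tfrac{1}{2}\sum_{j}a_{j}^{2}\right)/\sqrt{2\pi}$, complete the square to factor out $\exp\left(\sum_{1\leq j<k\leq n}a_{j}a_{k}\right)$, and apply the Hermite generating function with $t=\sum_{k}a_{k}$. The only difference is cosmetic: the paper treats $q=1$ directly as a separate case rather than as a limit, so your cautionary remarks about passing to the limit (and the alternative verification via Proposition \ref{GEN}) are unnecessary but harmless.
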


\begin{proof}
Using Remark \ref{special} we get: 
\begin{gather*}
g_{n}\left( x|\mathbf{a}^{\left( n\right) },1\right) =\exp \left(
-x^{2}/2+x\sum_{j=1}^{n}a_{j}-\frac{1}{2}\sum_{j=1}^{n}a_{j}^{2}\right) /%
\sqrt{2\pi }\allowbreak \\
\allowbreak =\frac{1}{\sqrt{2\pi }}\exp \left( \frac{1}{2}\left( \left(
\sum_{j=1}^{n}a_{j}\right) ^{2}-\sum_{j=1}^{n}a_{j}^{2}\right) \right) \exp
\left( -x^{2}/2+x\sum_{j=1}^{n}a_{j}-\frac{1}{2}\left(
\sum_{j=1}^{n}a_{j}\right) ^{2}\right) \\
=\exp \left( \sum_{1\leq j<k\leq n}a_{j}a_{k}\right) \frac{\exp \left(
-x^{2}/2\right) }{\sqrt{2\pi }}\sum_{j\geq 0}\frac{\left(
\sum_{k=1}^{n}a_{k}\right) ^{j}}{j!}H_{j}\left( x\right) .
\end{gather*}
\end{proof}

\subsection{Unsolved Problems \& Open Questions}

\label{open}

\subsubsection{Questions}

\begin{itemize}
\item What are the compact forms of functions $\left\{ T_{j}^{\left(
n\right) }\left( \mathbf{a}^{\left( n\right) },q\right) \right\} _{j\geq
0,n\geq 5}$ and $\left\{ A_{n}\left( \mathbf{a}^{\left( n\right) },q\right)
\right\} _{n\geq 5}$ ?

\item What are the compact forms of these functions for $q\allowbreak
=\allowbreak 0$ (free probability case) ?

\item Following formula for $\int_{-1}^{1}g_{5}\left( x|\mathbf{a}^{\left(
5\right) },0\right) \allowbreak dx$ given in assertion ii) of Theorem \ref%
{free} is it true that:%
\begin{equation*}
\int_{-1}^{1}g_{5}\left( x|\mathbf{a}^{\left( 5\right) },q\right)
\allowbreak dx\allowbreak =\allowbreak \frac{\left( \chi _{4}\left( \mathbf{a%
}^{\left( 5\right) }\right) -\chi _{5}\left( \mathbf{a}^{\left( 5\right)
}\right) \chi _{1}\left( \mathbf{a}^{\left( 5\right) }\right) +\chi
_{5}^{2}\left( \mathbf{a}^{\left( 5\right) }\right) \right) _{\infty }}{%
\prod_{1\leq j<k\leq 5}(a_{j}a_{k})_{\infty }}?
\end{equation*}%
Notice that for $a_{5}\allowbreak =\allowbreak 0$ it would reduce to AW
integral.

\item It would be valuable to get values $\left\{ A_{n}\left( \mathbf{a}%
^{\left( n\right) },q\right) \right\} $ for $n\allowbreak =\allowbreak 8,12$
and so on for complex values of parameters $\mathbf{a}^{\left( n\right) }$
but forming conjugate pairs. It would be also fascinating to find
polynomials that would be orthogonalized by so obtained densities.

This problem follows the probabilistic interpretation of Askey--Wilson
density rescaled, with complex parameters. Such interpretation for finite
Markov chains of length at least $3$ was presented in \cite{SzablAW}, \cite%
{Szab-bAW}. Let $\left\{ X_{1},X_{2},X_{3}\right\} $ denote this finite
Markov chain. Then recall that then AW density can be interpreted as the
conditional density of $X_{2}|X_{1},X_{3}$.

It would be exciting to find out if for say $n\allowbreak =\allowbreak 8$
similar probabilistic interpretation could be established. That is if we
could have defined $5-$dimensional random vector $(X_{1},\ldots ,X_{5})$
with normalized function $g_{8}\left( x|\mathbf{a}^{\left( 8\right)
},q\right) $ as the conditional density $X_{3}|X_{1},X_{2},X_{4},X_{5}.$
Note that then the chain $(X_{1},\ldots ,X_{5})$ could not be Markov.

Similar questions apply to the case $n\allowbreak =\allowbreak 12,16,...$ .
\end{itemize}

\subsubsection{Unsolved related problems and direction of further research.}

1. In \cite{Andrews1999} we find Theorem 10.8.2 which is due Gasper and
Rahman (1990) and which can be stated in our notation. For $\max_{1\,\leq
j\leq 5}\left\vert a_{j}\right\vert <1,$ $|q|<1$ we have:%
\begin{equation*}
\int_{-1}^{1}\frac{g_{5}\left( x|\mathbf{a}^{\left( 5\right) },q\right) }{%
\varphi _{h}\left( x|\prod_{j=1}^{5}a_{j},q\right) }dx\allowbreak
=\allowbreak \frac{\prod_{j=1}^{5}\left( \prod_{k=1,k\neq j}^{5}a_{k}\right)
_{\infty }}{\prod_{1\leq j<k\leq 5}\left( a_{j}a_{k}\right) _{\infty }}.
\end{equation*}

This result suggests considering the following functions%
\begin{equation*}
G_{n,m}\left( x|\mathbf{a}^{\left( n\right) },\mathbf{b}^{\left( m\right)
},q\right) \allowbreak =\allowbreak f_{h}\left( x|q\right) \frac{%
\prod_{j=1}^{n}\varphi _{h}\left( x|a_{i},q\right) }{\prod_{k=1}^{m}\varphi
_{h}\left( x|b_{k},q\right) },
\end{equation*}%
where $\mathbf{a}^{\left( n\right) }$ and $\mathbf{b}^{\left( m\right) }$
are certain vectors of dimensions respectively $n$ and $m,$ find its
integrals over $[-1,1]$ and expansions similar to (\ref{symroz}).

2. Recently appeared paper \cite{chung2014} on 'q-Laplace' transform where
many analogies to ordinary case were indicated. What what would be q-Laplace
transform of the distributions that were considered above?.

\section{Proofs}

\label{dowody}

\begin{proof}[Proof of Lemma \protect\ref{C2H}]
We have 
\begin{gather*}
\sum_{k,n,m\geq 0}\frac{a^{n}}{\left( q\right) _{n}}\frac{b^{m}}{\left(
q\right) _{m}}\frac{c^{k}}{\left( q\right) _{k}}h_{n}\left( x|q\right)
h_{m}\left( x|q\right) h_{k}\left( x|q\right) \allowbreak \\
=\frac{1}{\left( ab\right) _{\infty }}\sum_{j\geq 0}\frac{h_{j}\left(
x|q\right) }{\left( q\right) _{j}}\sum_{m=0}^{\infty }\frac{c^{m}}{\left(
q\right) _{m}}\sum_{k=0}^{j}\QATOPD[ ] {j}{k}_{q}c^{k}S_{m+j-k}^{\left(
2\right) }\left( a,b|q\right) .
\end{gather*}%
Since obviously $S_{n}^{\left( 2\right) }\left( a,b|q\right) \allowbreak
=\allowbreak a^{n}w_{n}\left( b/a|q\right) \allowbreak $ we get:$\allowbreak 
$%
\begin{eqnarray*}
&&\sum_{k,n,m\geq 0}\frac{a^{n}}{\left( q\right) _{n}}\frac{b^{m}}{\left(
q\right) _{m}}\frac{c^{k}}{\left( q\right) _{k}}h_{n}\left( x|q\right)
h_{m}\left( x|q\right) h_{k}\left( x|q\right) \\
&=&\frac{1}{\left( ab\right) _{\infty }}\sum_{j\geq 0}\frac{h_{j}\left(
x|q\right) }{\left( q\right) _{j}}\sum_{m=0}^{\infty }\frac{c^{m}}{\left(
q\right) _{m}}\sum_{k=0}^{j}\QATOPD[ ] {j}{k}_{q}c^{k}a^{m+j-k}w_{m+j-k}%
\left( b/a|q\right) \\
&&\frac{1}{\left( ab\right) _{\infty }}\sum_{j\geq 0}\frac{h_{j}\left(
x|q\right) }{\left( q\right) _{j}}\sum_{k=0}^{j}\QATOPD[ ] {j}{k}%
_{q}c^{k}a^{j-k}\sum_{m=0}^{\infty }\frac{(ac)^{m}}{\left( q\right) _{m}}%
w_{m+j-k}\left( b/a|q\right) .
\end{eqnarray*}%
Now we apply formula (\ref{Car11}) and get:$\allowbreak $%
\begin{eqnarray*}
&&\sum_{k,n,m\geq 0}\frac{a^{n}}{\left( q\right) _{n}}\frac{b^{m}}{\left(
q\right) _{m}}\frac{c^{k}}{\left( q\right) _{k}}h_{n}\left( x|q\right)
h_{m}\left( x|q\right) h_{k}\left( x|q\right) \\
&=&\allowbreak \frac{1}{\left( ab,bc,ac\right) _{\infty }}\sum_{j\geq 0}%
\frac{h_{j}\left( x|q\right) }{\left( q\right) _{j}}\sum_{k=0}^{j}\QATOPD[ ]
{j}{k}_{q}c^{k}a^{j-k}\mu _{j-k}\left( b/a|ac,q\right) \frac{1}{\left(
bc\right) _{\infty }\left( bc\right) _{\infty }} \\
&=&\frac{1}{\left( ab,bc,ac\right) _{\infty }}\sum_{j\geq 0}\frac{%
h_{j}\left( x|q\right) }{\left( q\right) _{j}}\sum_{l=0}^{j}\QATOPD[ ] {n}{l}%
_{q}c^{n-l}a^{l}\left( \frac{b}{a}\right) ^{j}\left( \frac{a}{b}\right)
^{j}\mu _{j}\left( \left( \frac{a}{b}\right) ^{-1}|ac,q\right) .
\end{eqnarray*}%
Now we use (\ref{symp1})$\allowbreak \allowbreak \allowbreak $and
Proposition \ref{ch_sym}, ii) and get:%
\begin{eqnarray*}
&&\sum_{k,n,m\geq 0}\frac{a^{n}}{\left( q\right) _{n}}\frac{b^{m}}{\left(
q\right) _{m}}\frac{c^{k}}{\left( q\right) _{k}}h_{n}\left( x|q\right)
h_{m}\left( x|q\right) h_{k}\left( x|q\right) \\
&=&\frac{1}{\left( ab,bc,ac\right) _{\infty }}\sum_{j\geq 0}\frac{%
h_{j}\left( x|q\right) }{\left( q\right) _{j}}\sum_{l=0}^{j}\QATOPD[ ] {j}{l}%
_{q}c^{j-l}b^{l}\sum_{k=0}^{l}\QATOPD[ ] {l}{k}_{q}\left( -ac\right) ^{k}q^{%
\binom{k}{2}}w_{l-k}\left( \frac{a}{b}|q\right) \\
&=&\frac{1}{\left( ab,bc,ac\right) _{\infty }}\sum_{j\geq 0}\frac{%
h_{j}\left( x|q\right) }{\left( q\right) _{j}}\sum_{k=0}^{j}\QATOPD[ ] {j}{k}%
_{q}\left( -ac\right) ^{k}q^{\binom{k}{2}}\sum_{l=k}^{j}\QATOPD[ ] {j-k}{l-k}%
_{q}c^{j-l}b^{l}w_{l-k}\left( \frac{a}{b}|q\right) \\
&=&\frac{1}{\left( ab,bc,ac\right) _{\infty }}\sum_{j\geq 0}\frac{%
h_{j}\left( x|q\right) }{\left( q\right) _{j}}\sum_{k=0}^{j}\QATOPD[ ] {j}{k}%
_{q}\left( -ac\right) ^{k}q^{\binom{k}{2}}\sum_{m=0}^{j-k}\QATOPD[ ] {j-k}{m}%
_{q}c^{j-k-m}b^{k+m}w_{m}\left( a/b|q\right) \\
&=&\frac{1}{\left( ab,bc,ac\right) _{\infty }}\sum_{j\geq 0}\frac{%
h_{j}\left( x|q\right) }{\left( q\right) _{j}}\sum_{k=0}^{j}\QATOPD[ ] {j}{k}%
_{q}\left( -abc\right) ^{k}q^{\binom{k}{2}}\sum_{m=0}^{j-k}\QATOPD[ ] {j-k}{m%
}_{q}c^{j-k-m}S_{m}^{\left( 2\right) }\left( a,b|q\right) .
\end{eqnarray*}
\end{proof}

\begin{proof}[Proof of Theorem \protect\ref{fAW}]
Applying (\ref{rozfQ}) we get: 
\begin{eqnarray*}
&&\sum_{k,n,m,j\geq 0}\frac{a^{n}}{\left( q\right) _{n}}\frac{b^{m}}{\left(
q\right) _{m}}\frac{c^{k}}{\left( q\right) _{k}}\frac{d^{j}}{\left( q\right)
_{j}}h_{n}\left( x|q\right) h_{m}\left( x|q\right) h_{k}\left( x|q\right)
h_{j}\left( x|q\right) \allowbreak \\
&=&\frac{1}{\left( ab,cd\right) _{\infty }}\sum_{m,k\geq 0}\frac{%
S_{m}^{(2)}\left( a,b\right) S_{k}^{(2)}\left( c,d\right) }{\left( q\right)
_{m}\left( q\right) _{k}}h_{m}\left( x|q\right) h_{k}\left( x|q\right) \\
&=&\frac{1}{\left( ab,cd\right) _{\infty }}\sum_{m,k\geq 0}\frac{%
S_{m}^{(2)}\left( a,b\right) S_{k}^{(2)}\left( c,d\right) }{\left( q\right)
_{m}\left( q\right) _{k}}\sum_{j=0}^{\min (m,k)}\QATOPD[ ] {m}{j}_{q}\QATOPD[
] {k}{j}_{q}\left( q\right) _{j}h_{m+k-2j}\left( x|q\right) \allowbreak \\
&=&\allowbreak \frac{1}{\left( ab,cd\right) _{\infty }}\sum_{j\geq 0}\frac{%
(ac)^{j}}{\left( q\right) _{j}}\sum_{m,k\geq j}\frac{a^{m-j}c^{k-j}w_{m}%
\left( b/a|q\right) w_{k}\left( d/c|q\right) }{\left( q\right) _{m-j}\left(
q\right) _{k-j}}h_{m-j+k-j}\left( x|q\right)
\end{eqnarray*}%
and further 
\begin{eqnarray*}
&&\sum_{k,n,m,j\geq 0}\frac{a^{n}}{\left( q\right) _{n}}\frac{b^{m}}{\left(
q\right) _{m}}\frac{c^{k}}{\left( q\right) _{k}}\frac{d^{j}}{\left( q\right)
_{j}}h_{n}\left( x|q\right) h_{m}\left( x|q\right) h_{k}\left( x|q\right)
h_{j}\left( x|q\right) \\
&=&\allowbreak \frac{1}{\left( ab,cd\right) _{\infty }}\sum_{j\geq 0}\frac{%
(ac)^{j}}{\left( q\right) _{j}}\sum_{s,t\geq 0}\frac{a^{s}c^{t}w_{s+j}\left(
b/a|q\right) w_{t+j}\left( d/c|q\right) }{\left( q\right) _{s}\left(
q\right) _{t}}h_{s+t}\left( x|q\right) \allowbreak \\
&=&\allowbreak \frac{1}{\left( ab,cd\right) _{\infty }}\sum_{j\geq 0}\frac{%
(ac)^{j}}{\left( q\right) _{j}}\sum_{n\geq 0}\frac{h_{n}\left( x|q\right) }{%
\left( q\right) _{n}}\sum_{k=0}^{n}\QATOPD[ ] {n}{k}_{q}a^{k}c^{n-k}w_{k+j}%
\left( b/a|q\right) w_{j+n-k}\left( d/c|q\right) \\
&=&\allowbreak \frac{1}{\left( ab,cd\right) _{\infty }}\sum_{n\geq 0}\frac{%
h_{n}\left( x|q\right) }{\left( q\right) _{n}}\sum_{k=0}^{n}\QATOPD[ ] {n}{k}%
_{q}a^{k}c^{n-k}\sum_{j\geq 0}\frac{(ac)^{j}}{\left( q\right) _{j}}%
w_{k+j}\left( b/a|q\right) w_{j+n-k}\left( d/c|q\right)
\end{eqnarray*}%
Now we apply Carlitz formulae (\ref{Car21}) and (\ref{Car22}) getting:%
\begin{eqnarray*}
&&\sum_{k,n,m,j\geq 0}\frac{a^{n}}{\left( q\right) _{n}}\frac{b^{m}}{\left(
q\right) _{m}}\frac{c^{k}}{\left( q\right) _{k}}\frac{d^{j}}{\left( q\right)
_{j}}h_{n}\left( x|q\right) h_{m}\left( x|q\right) h_{k}\left( x|q\right)
h_{j}\left( x|q\right) \\
&=&\frac{\left( abcd\right) _{\infty }}{\left( ab,cd,ac,bc,ad,bd\right)
_{\infty }}\sum_{n\geq 0}\frac{h_{n}\left( x|q\right) }{\left( q\right) _{n}}%
\sum_{k=0}^{n}\QATOPD[ ] {n}{k}_{q}a^{k}c^{n-k}\times \\
&&\sum_{s=0}^{k}\sum_{t=0}^{n-k}\QATOPD[ ] {k}{s}_{q}\QATOPD[ ] {n-k}{t}_{q}%
\frac{\left( cb\right) _{s}\left( ad\right) _{t}\left( bd\right) _{s+t}}{%
\left( abcd\right) _{s+t}}\left( \frac{b}{a}\right) ^{k-s}\left( \frac{d}{c}%
\right) ^{n-k-t} \\
&=&\frac{\left( abcd\right) _{\infty }}{\left( ab,cd,ac,bc,ad,bd\right)
_{\infty }}\sum_{n\geq 0}\frac{h_{n}\left( x|q\right) }{\left( q\right) _{n}}%
\sum_{k=0}^{n}\QATOPD[ ] {n}{k}_{q}\times \\
&&\sum_{s=0}^{k}\sum_{t=0}^{n-k}\QATOPD[ ] {k}{s}_{q}\QATOPD[ ] {n-k}{t}_{q}%
\frac{\left( cb\right) _{s}\left( ad\right) _{t}\left( bd\right) _{s+t}}{%
\left( abcd\right) _{s+t}}a^{s}b^{k-s}c^{t}d^{n-k-t}.
\end{eqnarray*}%
Thus it remains to show that for every $n\geq 0.$

\begin{eqnarray*}
&&\sum_{k=0}^{n}\QATOPD[ ] {n}{k}_{q}\sum_{s=0}^{k}\sum_{t=0}^{n-k}\QATOPD[ ]
{k}{s}_{q}\QATOPD[ ] {n-k}{t}_{q}\frac{\left( cb\right) _{s}\left( ad\right)
_{t}\left( bd\right) _{s+t}}{\left( abcd\right) _{s+t}}%
a^{s}b^{k-s}c^{t}d^{n-k-t} \\
&=&\sum_{j=0}^{n}\QATOPD[ ] {n}{j}_{q}\frac{\left( bd\right) _{j}}{\left(
abcd\right) _{j}}S_{n-j}^{\left( 2\right) }\left( b,d|q\right) \sum_{k=0}^{j}%
\QATOPD[ ] {j}{k}_{q}\left( cb\right) _{k}a^{k}\left( ad\right)
_{j-k}c^{j-k}.
\end{eqnarray*}%
This fact follows the following calculations:%
\begin{eqnarray*}
&&\sum_{k=0}^{n}\QATOPD[ ] {n}{k}_{q}\sum_{s=0}^{k}\sum_{t=0}^{n-k}\QATOPD[ ]
{k}{s}_{q}\QATOPD[ ] {n-k}{t}_{q}\frac{\left( cb\right) _{s}\left( ad\right)
_{t}\left( bd\right) _{s+t}}{\left( abcd\right) _{s+t}}%
a^{s}b^{k-s}c^{t}d^{n-k-t} \\
&=&\sum_{s,t\geq 0,s+t\leq n}\frac{\left( q\right) _{n}}{\left( q\right)
_{s}\left( q\right) _{t}\left( q\right) _{n-s-t}}\frac{\left( cb\right)
_{s}\left( ad\right) _{t}\left( bd\right) _{s+t}}{\left( abcd\right) _{s+t}}%
a^{s}c^{t}\sum_{k=s\vee n-t}^{n}\QATOPD[ ] {n-t-s}{k-s}_{q}b^{k-s}d^{n-k-t}%
\allowbreak \\
&=&\sum_{s,t\geq 0,s+t\leq n}\frac{\left( q\right) _{n}}{\left( q\right)
_{s}\left( q\right) _{t}\left( q\right) _{n-s-t}}\frac{\left( cb\right)
_{s}\left( ad\right) _{t}\left( bd\right) _{s+t}}{\left( abcd\right) _{s+t}}%
a^{s}c^{t}\sum_{m=0\vee n-t-s}^{n-s}\QATOPD[ ] {n-t-s}{m}_{q}b^{m}d^{n-s-m-t}
\\
&=&\sum_{s,t\geq 0,s+t\leq n}\frac{\left( q\right) _{n}}{\left( q\right)
_{s}\left( q\right) _{t}\left( q\right) _{n-s-t}}\frac{\left( cb\right)
_{s}\left( ad\right) _{t}\left( bd\right) _{s+t}}{\left( abcd\right) _{s+t}}%
a^{s}c^{t}S_{n-t-s}^{\left( 2\right) }\left( b,d|q\right) .
\end{eqnarray*}%
Now we introduce new indices of summation: $j=t+s,\allowbreak k=s.$ We have
then%
\begin{eqnarray*}
&&\sum_{s,t\geq 0,s+t\leq n}\frac{\left( q\right) _{n}}{\left( q\right)
_{s}\left( q\right) _{t}\left( q\right) _{n-s-t}}\frac{\left( cb\right)
_{s}\left( ad\right) _{t}\left( bd\right) _{s+t}}{\left( abcd\right) _{s+t}}%
a^{s}c^{t}S_{n-t-s}^{\left( 2\right) }\left( b,d|q\right) \\
&=&\sum_{j=0}^{n}\QATOPD[ ] {n}{j}_{q}\frac{\left( bd\right) _{j}}{\left(
abcd\right) _{j}}S_{n-j}^{\left( 2\right) }\left( b,d|q\right) \sum_{k=0}^{j}%
\QATOPD[ ] {j}{k}_{q}\left( cb\right) _{k}a^{k}\left( ad\right)
_{j-k}c^{j-k}.
\end{eqnarray*}
\end{proof}

\begin{proof}[Proof of Proposition \protect\ref{GEN}]
Notice that for $n\allowbreak =\allowbreak 0$ our formulae are true since we
have: $g_{1}\left( x|a_{1},q\right) \allowbreak =\allowbreak f_{h}\left(
x|q\right) \varphi _{h}\left( x|a_{1},q\right) \allowbreak =\allowbreak
f_{h}\left( x|q\right) \sum_{m\geq 0}\frac{a_{1}^{m}}{\left( q\right) _{m}}%
h_{m}\left( x|q\right) ,$ So $T_{m}^{\left( 1\right) }\left( a_{1},q\right)
\allowbreak =\allowbreak a_{1}^{m}$ and $A_{1}\left( a_{1},q\right)
\allowbreak =\allowbreak 1.$ Next notice that:%
\begin{equation*}
g_{n+1}\left( x|\mathbf{a}^{\left( n+1\right) },q\right) =g_{n}\left( x|%
\mathbf{a}^{\left( n\right) },q\right) \varphi _{h}\left( x|a_{n+1},q\right)
,
\end{equation*}%
where we understand $\mathbf{a}^{\left( n+1\right) }\allowbreak =\allowbreak
\left( a_{1},\ldots ,a_{n},a_{n+1}\right) .$ So by induction assumption the
left hand side of (\ref{symroz}) is equal to:%
\begin{equation*}
A_{n+1}\left( \mathbf{a}^{\left( n+1\right) },q\right) f_{h}\left(
x|q\right) \sum_{j\geq 0}\frac{T_{j}^{\left( n+1\right) }\left( \mathbf{a}%
^{\left( n+1\right) },q\right) }{\left( q\right) _{j}}h_{j}\left( x|q\right)
,
\end{equation*}%
while the right hand side to 
\begin{equation*}
A_{n}\left( \mathbf{a}^{\left( n\right) },q\right) f_{h}\left( x|q\right)
\sum_{j,k\geq 0}\frac{T_{j}^{\left( n\right) }\left( \mathbf{a}^{\left(
n\right) },q\right) a_{n+1}^{k}}{\left( q\right) _{j}\left( q\right) _{k}}%
h_{j}\left( x|q\right) h_{k}\left( x|q\right) .
\end{equation*}%
We apply again (\ref{linH}) getting:%
\begin{eqnarray*}
&&\sum_{j,k\geq 0}\frac{T_{j}^{\left( n\right) }\left( \mathbf{a}^{\left(
n\right) },q\right) a_{n+1}^{k}}{\left( q\right) _{j}\left( q\right) _{k}}%
h_{j}\left( x|q\right) h_{k}\left( x|q\right) \\
&=&\sum_{j,k\geq 0}\frac{T_{j}^{\left( n\right) }\left( \mathbf{a}^{\left(
n\right) },q\right) a_{n+1}^{k}}{\left( q\right) _{j}\left( q\right) _{k}}%
\sum_{m=0}^{j\wedge k}\QATOPD[ ] {k}{m}_{q}\QATOPD[ ] {j}{m}_{q}\left(
q\right) _{m}h_{j+k-2m}\left( x|q\right) \\
&=&\sum_{m\geq 0}\frac{a_{n+1}^{m}}{\left( q\right) _{m}}\sum_{k,j\geq m}%
\frac{a_{n+1}^{k-m}T_{j}^{\left( n\right) }\left( \mathbf{a}^{\left(
n\right) },q\right) }{\left( q\right) _{k-m}\left( q\right) _{j-m}}%
h_{j+k-2m}\left( x|q\right) \\
&=&\sum_{m\geq 0}\frac{a_{n+1}^{m}}{\left( q\right) _{m}}\sum_{s,t\geq 0}%
\frac{a_{n+1}^{s}T_{t+m}^{\left( n\right) }\left( \mathbf{a}^{\left(
n\right) },q\right) }{\left( q\right) _{s}\left( q\right) _{t}}h_{s+t}\left(
x|q\right)
\end{eqnarray*}

Next we introduce new indices of summation $r\allowbreak =\allowbreak s+t$
and $j\allowbreak =\allowbreak s$ and get: 
\begin{gather*}
\sum_{j,k\geq 0}\frac{T_{j}^{\left( n\right) }\left( \mathbf{a}^{\left(
n\right) },q\right) a_{n+1}^{k}}{\left( q\right) _{j}\left( q\right) _{k}}%
h_{j}\left( x|q\right) h_{k}\left( x|q\right) =\sum_{m\geq 0}\frac{%
a_{n+1}^{m}}{\left( q\right) _{m}}\sum_{r=0}^{\infty }\frac{h_{r}\left(
x|q\right) }{\left( q\right) _{r}}\sum_{j=0}^{r}\QATOPD[ ] {r}{j}%
_{q}a_{n+1}^{j}T_{m+r-j}^{\left( n\right) }\left( \mathbf{a}^{\left(
n\right) },q\right) \\
=\sum_{r=0}^{\infty }\frac{h_{r}\left( x|q\right) }{\left( q\right) _{r}}%
\sum_{j=0}^{r}\QATOPD[ ] {r}{j}_{q}a_{n+1}^{j}\sum_{m\geq 0}\frac{a_{n+1}^{m}%
}{\left( q\right) _{m}}T_{m+r-j}^{\left( n\right) }\left( \mathbf{a}^{\left(
n\right) },q\right) \\
=\sum_{m\geq 0}\frac{a_{n+1}^{m}}{\left( q\right) _{m}}T_{m}^{\left(
n\right) }\left( \mathbf{a}^{\left( n\right) },q\right) \sum_{r=0}^{\infty }%
\frac{h_{r}\left( x|q\right) }{\left( q\right) _{r}}\sum_{j=0}^{r}\QATOPD[ ]
{r}{j}_{q}a_{n+1}^{j}H_{r-j}^{\left( n\right) }\left( \mathbf{a}^{\left(
n\right) },q\right) \\
=\frac{A_{n+1}\left( \mathbf{a}^{\left( n+1\right) },q\right) }{A_{n}\left( 
\mathbf{a}^{\left( n\right) },q\right) }\sum_{r=0}^{\infty }\frac{%
h_{r}\left( x|q\right) }{\left( q\right) _{r}}\sum_{j=0}^{r}\QATOPD[ ] {r}{j}%
_{q}a_{n+1}^{j}H_{r-j}^{\left( n\right) }\left( \mathbf{a}^{\left( n\right)
},q\right) .
\end{gather*}
\end{proof}

\begin{proof}[Proof of Theorem \protect\ref{free}]
We use (\ref{sig4}) and utilizing Remark \ref{particular} we get:%
\begin{gather*}
\sigma _{n}^{\left( 4\right) }\left( a,b,c,d|0\right) \allowbreak
=\allowbreak S_{n}^{\left( 2\right) }(b,d|0)\allowbreak +\allowbreak \frac{%
(1-bd)}{(1-abcd)}\sum_{j=1}^{n}S_{n-j}^{\left( 2\right) }\left( b,d|q\right)
((1-ad)c^{j}\allowbreak + \\
\allowbreak (1-cb)a^{j}\allowbreak +\allowbreak
(1-cb)(1-ad)ac\sum_{k=1}^{j-1}a^{k-1}c^{j-1-k}).\allowbreak 
\end{gather*}%
And further%
\begin{gather*}
\sigma _{n}^{\left( 4\right) }\left( a,b,c,d|0\right) =\allowbreak
S_{n}^{\left( 2\right) }(b,d|0)\allowbreak +\allowbreak \frac{(1-bd)}{%
(1-abcd)}\sum_{j=1}^{n}S_{n-j}^{\left( 2\right) }\left( b,d|q\right)
((1-ad)c^{j}\allowbreak +\allowbreak (1-cb)a^{j}\allowbreak +\allowbreak  \\
(1-cb)(1-ad)acS_{j-2}^{\left( 2\right) }(a,c|0)\allowbreak \allowbreak  \\
=S_{n}^{\left( 2\right) }(b,d|0)\allowbreak +\allowbreak \frac{(1-bd)}{%
(1-abcd)}\sum_{j=1}^{n}S_{n-j}^{\left( 2\right) }\left( b,d|q\right)
((1-ad)c^{j}\allowbreak +\allowbreak (1-cb)a^{j})\allowbreak  \\
+\allowbreak \frac{(1-bd)(1-cb)(1-ad)ac}{(1-abcd)}\sum_{j=2}^{n}S_{n-j}^{%
\left( 2\right) }\left( b,d|q\right) S_{j-2}^{\left( 2\right)
}(a,c|0)\allowbreak .
\end{gather*}%
Now we use formula (\ref{rozklS}). Then we replace $a$ by $a_{1}$ , $b$ by $%
a_{2}$ and so on. Finally we use formulae (\ref{_5}) and (\ref{chS}) which
remembering that $\left( 0\right) _{n}\allowbreak =\allowbreak 1$ leads to
our integral formula.
\end{proof}

\begin{acknowledgement}
The author is very grateful to an unknown referee for pointing out
additional references and just evaluation of the paper.
\end{acknowledgement}

\end{document}